\newtheorem{theorem}{Theorem}[section]
\newtheorem{corollary}[theorem]{Corollary}
\newtheorem{definition}[theorem]{Definition}
\newtheorem{lemma}[theorem]{Lemma}
\newtheorem{proposition}[theorem]{Proposition}
\newtheorem{remark}[theorem]{Remark}
\begin{document}

\title{Principal Bundles as Frobenius adjunctions with application to geometric morphisms}

\author{Christopher F. Townsend}
\address{8 Aylesbury Road, Tring, Hertfordshire, HP23 4DJ, U.K.}

\maketitle

\begin{abstract}
Using a suitable notion of principal $G$-bundle, defined relative to an arbitrary cartesian category, it is shown that principal bundles can be characterised as adjunctions that stably satisfy Frobenius reciprocity. The result extends from $G$, an internal group, to $\mathbb{G}$ an internal groupoid. Since geometric morphisms can be described as certain adjunctions that are stably Frobenius, as an application it is proved that all geometric morphisms, from a localic topos to a bounded topos, can be characterised as principal bundles.
\end{abstract}

\section{Introduction}

The main purpose of this paper is to show that in any cartesian category $\mathcal{C}$, principal $G$-bundles over an object $X$ for an internal group $G$ are the same thing as adjunctions $\mathcal{C}/X \pile{\rTo \\ \lTo} [ G, \mathcal{C}]$ over $\mathcal{C}$ that stably satisfy Frobenius reciprocity, provided the adjunction of connected components, $ \Sigma_G \dashv G^*:[ G, \mathcal{C}] \pile{\rTo \\ \lTo} \mathcal{C}$, exists and itself stably satisfies Frobenius reciprocity. $[ G, \mathcal{C}] $ is the category of objects of $\mathcal{C}$ equipped with a $G$ action; i.e. the category of $G$-objects with $G$-homomorphisms between them.

Geometric morphisms can be characterised as adjunctions between categories of locales that satisfy Frobenius reciprocity, \cite{towgeom}. So as an application to the case $\mathcal{C}=\mathbf{Loc}$, it follows that geometric morphisms $Sh(X) \rTo B(G) $, from the category of sheaves over a locale $X$ to the topos a $G$-sets, for any localic group $G$, are the same thing as localic principal $\hat{G}$-bundles, where $\hat{G}$ is the \'{e}tale completion of $G$. This is a key relationship as it can be used to establish, for discrete $G$ at least, the more well known result that there is a classifying space for principal $G$-bundles; see \cite{IM_classtop} for a description of how topos theoretic results about principal bundles relate back to more well known topological results.

Our main result easily generalises from internal groups to internal groupoids. It follows that any geometric morphism from a localic topos to a topos bounded over some base topos $\mathbf{Set}$ can be represented as a principal bundle.

In the next section we recall some basic facts about the category $[ G, \mathcal{C}]$ of $G$-objects and $G$-homomorphisms for a group $G$ internal to a cartesian category $\mathcal{C}$ and define a notion of principal $G$-bundle over an object $X$ of $\mathcal{C}$.

In the third section we prove our main result which shows how the notion of principal $G$-bundle can be related to stably Frobenius adjunctions. The proofs and techniques are simple as they only involved cartesian categories and various adjunctions. Our strategy is to first demonstrate the main result for the case of principal bundles over the terminal object $1$ (i.e. $X=1$) and then show how the case of general $X$ can be obtained by applying the proof for $X=1$ to the cartesian category $\mathcal{C}/X$.

The fourth section describes in summary how the main result generalises to groupoids.

The fifth section describes how the main result can be applied to the case $\mathcal{C} = \mathbf{Loc}$, the category of locales, to give a description of geometric morphisms $Sh(X) \rTo B{\mathbb{G}} $ for certain classes of localic groupoids $\mathbb{G}$.

The results apply equally to open localic groupoids and to proper localic groupoids. In fact, an axiomatic treatment of locale theory (\cite{tow_weakt}) reveals that the theory of `open' principal bundles can be viewed as order dual to the theory of `proper' principal bundles. The results here show that both theories of principal bundles have representations as Frobenius adjunctions. What is not clear is whether the theory of `proper' principal bundles has anything like the depth of the more familiar theory of `open' principal bundles.

\section{Principal G-bundles in a cartesian category}

We start with some basic definitions and results relative to a cartesian category, $\mathcal{C}$. If $(G,m)$ is an internal group then
$[G, \mathcal{C}]$ is the category of {\em $G$-objects}, whose objects are pairs $(A,*_A)$ where $A$ is an object of $\mathcal{C}$ and $*_A: G \times A \rTo A$ is a $G$-action; that is, satisfies the usual unit and associative diagrams. For example, $(G,m)$ itself is a $G$-object; further for any object $X$ of $\mathcal{C}$, $(X, \pi_2)$ is an object of $[ G, \mathcal{C}]$; it is $X$ with the {\em trivial} action. The morphisms $f: (A, *_A) \rTo (B, *_B)$ of $[G , \mathcal{C}]$ are morphisms $f: A \rTo B$ that commute with the actions, i.e. $f *_A   = *_B ( Id_G \times f)$. Sending any $X$ to $(X, \pi_2)$ defines a functor $G^*$ from $\mathcal{C}$ to $[G , \mathcal{C}]$. Its left adjoint, when it exists, is written $\Sigma_G$ and must send $(A,*_A)$ to the coequalizer of $\pi_2,*_A: G \times A \pile{\rTo \\ \rTo} A$. If $\Sigma_G$ exists then $\Sigma_G(G,m)=1$ because $!:G \rTo 1$ is a coequalizer of $\pi_2,m:G \times G \pile{ \rTo \\ \rTo} G$ (it is split by the identity $e:1 \rTo G$ of $G$).

$[G , \mathcal{C}]$ is a cartesian category; products and equalizers are created in $\mathcal{C}$. $(G,m)$ is a rather special object of $[G , \mathcal{C}]$; for any other object $(A, *_A)$, $(A, *_A) \times (G,m) \cong (A, \pi_2) \times (G,m)$. To see this send an `element' $(a,g)$ of $(A, \pi_2) \times (G,m)$ to $(g*_Aa,g)$ and an `element' $(a,g)$ of $(A, *_A) \times (G,m)$ to $(g^{-1} *_A a , g)$; it is easy to verify that this establishes an isomorphism in $[G , \mathcal{C}]$. Although this argument, and arguments below, deploy `elements' it is important to understand that this is just shorthand for defining and arguing about morphisms in a category.

If $X$ is an object of $\mathcal{C}$ then the {\em slice category}, written $\mathcal{C}/X$, is the category whose objects are morphisms $f:Y \rTo X$ and whose morphisms are commuting triangles. We will tend to use the notation $Y_f$ when considering the morphism $f: Y \rTo X$ as an object of $\mathcal{C}$. Any morphism $f: Y \rTo X$ of $\mathcal{C}$ gives rise to an adjunction $\Sigma_f \dashv f^* : \mathcal{C}/Y \pile{ \rTo \\ \lTo } \mathcal{C}/X$ between slice categories where the right adjoint is given by pullback (and $\Sigma_f(Z_g) = Z_{fg}$ for a morphism $g:Z \rTo Y$). $\mathcal{C}/X$ is a cartesian category; limits are created in $\mathcal{C}$. Coequalizers in $\mathcal{C}/X$, when they exists, are created in $\mathcal{C}$. If $G=(G,m,e)$ is an internal group of $\mathcal{C}$ and $X$ is an object of $\mathcal{C}$ then $G \times X$ is an internal group of $\mathcal{C}/X$; its multiplication is given by $(G \times G) \times X \rTo{m \times Id_X} G \times X$ and its unit is $X \rTo^{(e!^X,Id_X)} G \times X $.

A morphism $f:X \rTo Y$ of $\mathcal{C}$ is an {\em effective descent morphism} if the pullback functor $f^* : \mathcal{C}/Y  \rTo  \mathcal{C}/X$ is monadic. Since $f^*$ always has a left adjoint, by Beck's monadicity theorem, $f$ is an effective descent morphism if and only if $f^*$ reflects isomorphisms and $\mathcal{C}/Y$ has and $f^*$ preserves coequalizers for any pair of $f^*$-split arrows. For any internal group $G$ in a cartesian category the morphism $!:(G,m) \rTo 1$ of $[G, \mathcal{C}]$ is an effective descent morphism. This can be observed because of the well known fact that $[G , \mathcal{C}]/(G,m) \simeq \mathcal{C}$ (to see this send a morphism to its kernel in one direction and send an object $X$ of $\mathcal{C}$ to the projection $(X,\pi_2) \times (G,m) \rTo (G,m)$ in the other). Under this equivalence the pullback functor $(G,m)^*:[G,\mathcal{C}] \rTo $$ [G,\mathcal{C}] /(G,m) $ is just the forgetful functor from $[G,\mathcal{C}]$ to $\mathcal{C}$ that forgets the group action; its left adjoint sends $X$ to $(G,m) \times (X, \pi_2) $ and this adjunction induces a monad on $\mathcal{C}$; it is easy to see that $[G,\mathcal{C}]$ is by definition the category of algebras of this induced monad.

An adjunction $L\dashv R: \mathcal{D} \pile{\rTo \\ \lTo} \mathcal{C}$ between cartesian categories satisfies \emph{Frobenius reciprocity} provided the morphism $L(R(X)\times W)\rTo^{(L\pi _{1},L\pi _{2})}LRX\times LW\rTo^{\varepsilon _{X}\times Id_{LW}}X\times LW$ is an isomorphism for all objects $W$ and $X$ of $\mathcal{D}$ and $\mathcal{C}$ respectively where $\varepsilon$ is the counit of the adjunction. For any object $X$ of $\mathcal{C}$ there is an adjunction $L_X\dashv R_X: \mathcal{D}/RX \pile{\rTo \\ \lTo} \mathcal{C}/X$ given by $L_X(W_g)=$ `the adjoint transpose of $g$' and $R_X(Y_f)=R(f)$. The original adjunction $L\dashv R$ is said to be \emph{stably Frobenius} provided $L_X\dashv R_X $ satisfies Frobenius reciprocity for every object $X$ of $\mathcal{C}$. It is easy to verify that for any morphism $f: X \rTo Y$ of a cartesian category the pullback adjunction $\Sigma_f \dashv f^* : \mathcal{C}/X \pile{ \rTo \\ \lTo } \mathcal{C}/Y$ is stably Frobenius. Notice that both the property of satisfying Frobenius reciprocity and of being stably Frobenius are stable under composition of adjunctions. Given two adjunctions $\mathcal{D} \pile{\rTo^L \\ \lTo_R} \mathcal{C}$ and $\mathcal{D}' \pile{\rTo^{L'} \\ \lTo_{R'}} \mathcal{C}$ then any third adjunction $F \dashv U : \mathcal{D} \pile{\rTo \\ \lTo} \mathcal{D}'$ is said to be {\em over $\mathcal{C}$} provided $L'F=L$; of course, in such circumstances $UR' \cong R$ by uniqueness of adjoints. The collection all adjunctions between $\mathcal{D}$ and $\mathcal{D}'$ over $\mathcal{C}$ can be considered as a category with morphisms natural transformations between the left adjoints.

Our first lemma shows that in certain situations adjunctions that satisfy Frobenius reciprocity and are over a base category $\mathcal{C}$ give rise to effective descent morphisms:
\begin{lemma}\label{descent_lemma}
$G$ is an internal group in a cartesian category $\mathcal{C}$ such that $G^*: \mathcal{C} \rTo $ $ [ G ,\mathcal{C}] $ has a left adjoint $\Sigma_G$ and the resulting adjunction satisfies Frobenius reciprocity. $L \dashv R: \mathcal{C} \pile{ \rTo \\ \lTo} [G, \mathcal{C}]$ is an adjunction over $\mathcal{C}$ (i.e. $\Sigma_G L = Id_{\mathcal{C}}$) which also satisfies Frobenius reciprocity. Write $(P,*)$ for the $G$-object $L1$ and assume further that $P \cong R(G,m)$. Then $!: P \rTo 1$ is an effective descent morphism.
\end{lemma}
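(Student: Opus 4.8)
The plan is to verify Beck's monadicity criterion for the pullback functor $P^{*}\colon\mathcal{C}=\mathcal{C}/1\rTo\mathcal{C}/P$, in the form recorded above: it suffices to show that $P^{*}$ reflects isomorphisms and that $\mathcal{C}$ has, and $P^{*}$ preserves, coequalizers of $P^{*}$-split pairs. The engine for both checks is an explicit description of $L$. Since the adjunction is over $\mathcal{C}$ we have $\Sigma_{G}L=Id_{\mathcal{C}}$, and composing this with $\Sigma_{G}\dashv G^{*}$ gives $RG^{*}\cong Id_{\mathcal{C}}$; feeding $X\cong RG^{*}X$ into the Frobenius isomorphism $LR(-)\cong(-)\times L1$ for $L\dashv R$ then yields $LX\cong G^{*}X\times(P,*)$, naturally in $X$ (the identification $P\cong R(G,m)$ of the carrier of $L1$ being consistent with this). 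Writing $U\colon[G,\mathcal{C}]\rTo\mathcal{C}$ for the forgetful functor, this says in particular that $UL\cong(-)\times P$. The second ingredient I would use is that $U$ is monadic: this is exactly the effective descent of $!\colon(G,m)\rTo 1$ in $[G,\mathcal{C}]$ recorded above, so $U$ reflects isomorphisms and creates coequalizers of $U$-split pairs.

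Reflection of isomorphisms is then immediate. If $P^{*}\phi$ is invertible then so is $\phi\times P$ in $\mathcal{C}$, hence so is $UL\phi$; as $U$ reflects isomorphisms, $L\phi$ is invertible, and applying $\Sigma_{G}$ together with $\Sigma_{G}L=Id_{\mathcal{C}}$ shows that $\phi=\Sigma_{G}L\phi$ is invertible.

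For the coequalizer condition I would start with a $P^{*}$-split pair $u,v$ in $\mathcal{C}$. Applying $\Sigma_{P}$ to the assumed split coequalizer in $\mathcal{C}/P$ (split coequalizers being absolute) shows that $u\times P,v\times P$ has a split coequalizer in $\mathcal{C}$; since $UL\cong(-)\times P$ this says precisely that $Lu,Lv$ is a $U$-split pair in $[G,\mathcal{C}]$. Monadicity of $U$ now produces $Q:=\mathrm{coeq}(Lu,Lv)$ with $U$ preserving it. Because $\Sigma_{G}$ is a left adjoint and $\Sigma_{G}L=Id_{\mathcal{C}}$, applying $\Sigma_{G}$ gives that $C:=\mathrm{coeq}(u,v)=\Sigma_{G}Q$ exists in $\mathcal{C}$; and because $L$ is a left adjoint it preserves this coequalizer, so $LC\cong Q$. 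Hence $UQ\cong ULC\cong C\times P$, and tracing the coequalizer maps identifies $C\times P$, equipped with its projection, as the coequalizer of $P^{*}u,P^{*}v$ in $\mathcal{C}/P$ (coequalizers there being computed in $\mathcal{C}$). This is exactly the assertion that $P^{*}$ preserves $\mathrm{coeq}(u,v)$, which completes the Beck conditions and so the proof that $!\colon P\rTo 1$ is effective descent.

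The step I expect to be the crux is this coequalizer preservation, and specifically the double role played by the single pair $Lu,Lv$: it must be read simultaneously as $U$-split (to get existence and $U$-preservation of $Q$ from monadicity of $U$) and as lying in the image of the colimit-preserving functor $L$ (to identify $Q$ with $LC$ and thereby compute $UQ$ as $C\times P$). The one point requiring care rather than formalities is the compatibility of the two coequalizer maps under the isomorphism $UQ\cong C\times P$, namely checking that this identification is the canonical comparison over $P$ and not merely an abstract isomorphism.
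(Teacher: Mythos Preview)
Your proof is correct and follows the same overall architecture as the paper's: verify Beck's criterion for $P^{*}$ by transporting the $P^{*}$-split data into $[G,\mathcal{C}]$ and invoking the monadicity of the forgetful functor $U\colon[G,\mathcal{C}]\to\mathcal{C}$ (equivalently, the effective descent of $(G,m)\to 1$). The logical skeleton---reflect isomorphisms via $\Sigma_{G}L=Id$, create the coequalizer upstairs, push it back down with $\Sigma_{G}$, and identify the underlying object with $C\times P$---matches the paper step for step.

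The one genuine difference is \emph{which} instance of Frobenius reciprocity you invoke. The paper uses the hypothesis $P\cong R(G,m)$ to rewrite $L(P\times X)\cong L(R(G,m)\times X)\cong(G,m)\times LX$, and then transposes the split diagram across $L\dashv R$ into $[G,\mathcal{C}]/(G,m)$. You instead use $RG^{*}\cong Id$ (a consequence of $\Sigma_{G}L=Id$) to obtain $LX\cong L(RG^{*}X)\cong G^{*}X\times(P,*)$, hence $UL\cong(-)\times P$ directly. This is cleaner: it never actually touches the assumption $P\cong R(G,m)$, so your argument in fact proves the lemma without that extra hypothesis. The paper itself remarks after the statement that this hypothesis is redundant (it is established later in Proposition~\ref{X=1}), and your packaging makes that redundancy visible already here. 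Your closing caution about checking that the isomorphism $UQ\cong C\times P$ is the canonical comparison over $P$ is well placed; it is resolved by observing that under $Q\cong LC$ the coequalizer map $t\colon LY\to Q$ is identified with $Ln$, so $Ut$ becomes $n\times Id_{P}$, which is $P^{*}n$.
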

We will see in the next section that, in fact, the condition $P \cong R(G,m)$ always holds.
\begin{proof}
Firstly $\Sigma_GL1 = 1$ by assumption that $L \dashv R$ is over $\mathcal{C}$. So for any object $X$ of $\mathcal{C}$, $\Sigma_G(L1 \times G^* X) \cong \Sigma_GL1 \times X \cong X$; i.e.
\begin{eqnarray*}
G \times P \times X \pile{ \rTo^{* \times Id_X} \\ \rTo_{\pi_{23}}} P \times X \rTo^{\pi_2} X
\end{eqnarray*}
is a coequalizer diagram in $\mathcal{C}$. Since this is a coequalizer for every $X$ it is easy to see that $P^* : \mathcal{C} \rTo \mathcal{C}/P$ reflects isomorphisms.
So to complete the proof all we need to show is that if $X \pile{\rTo^f \\ \rTo_g } Y $ is pair of morphisms of $\mathcal{C}$ with the property that there is a split coequalizer diagram
\begin{eqnarray*}
P \times X \pile{\rTo^{Id \times f} \\ \rTo_{Id \times g} \\ \lTo_s} P \times Y \pile{ \rTo^q \\ \lTo_i} Q  \text{ \em \em \em    (*)}
\end{eqnarray*}
 in $\mathcal{C}/P$ then there is a coequalizer $Y \rTo^n N$ of $f$ and $g$ in $\mathcal{C}$ with the property that $ P \times Y \rTo^{Id_P \times n} P \times N$ is isomorphic to $P \times Y \rTo^q Q$.

 Since $P \cong R(G,m)$ by taking the adjoint transpose of (*) (across $L \dashv R$) and applying the Frobenius reciprocity assumption we obtain a split coequalizer diagram:
\begin{eqnarray*}
(G,m)  \times LX \pile{\rTo^{Id \times Lf} \\ \rTo_{Id \times Lg} \\ \lTo_{s'}} (G,m) \times LY \pile{ \rTo^{q \circ \cong} \\ \lTo_{i'}} LQ
\end{eqnarray*}
Since $(G,m) \rTo 1$ is an effective descent morphism, there is a coequalizer diagram
\begin{eqnarray*}
LX \pile{ \rTo^{Lf} \\ \rTo_{Lg}} LY \rTo^t (T,*_T)
\end{eqnarray*}
in $[ G ,\mathcal{C}]$ with the property that $(G,m) \times LY \rTo^{Id \times t} (G,m) \times (T,*_T) $ is isomorphic to $(G,m) \times LY \rTo^{Lq \circ \cong} LQ$. Because $\Sigma_GL  = Id_{\mathcal{C}}$, $\Sigma_G t $ is a morphism from Y to $\Sigma_G(T, *_T)$. Now for any morphism $Y \rTo^h Z$ of $\mathcal{C}$, because $Z \cong RG^*Z$ we have that $h$ composes equally with $f$ and $g$ (i.e. $hf=hg$) if and only if $\hat{h} Lf = \hat{h} Lg$ (where $\hat{h}$ is the adjoint transpose of $Y \rTo^h Z \rTo^{\cong} RG^*Z)$ if and only if $\hat{h}:LY \rTo G^* Z$ factors through $(T,*_T)$, if and only if $Y \rTo^h Z$ factors through $\Sigma_G (T,*_T)$, where for the last implication we are taking adjoint transpose under $\Sigma_G\dashv G^*$. It follows that $Y \rTo^{\Sigma_G(t)} \Sigma_G(T,*_T)$ is a coequalizer in $\mathcal{C}$ of $f$,$g$. Notice that $(T, *_T) \cong L \Sigma_G ( T , *_T)$ because $L$, as a left adjoint, preserves coequalizers. Finally, for any object $W$ of $\mathcal{C}$, morphisms $Q \rTo W$ correspond to morphisms $P \times Y \rTo W$ that compose equally with $Id \times f$ and $Id \times g$ and these in turn correspond (under $L \dashv R$, using $W \cong RG^*W$) to morphisms $(G,m) \times LY \rTo G^*W$ that compose equally with $Id \times Lf$ and $Id \times Lg$. These then correspond to morphisms $(G,m) \times (T , *_T) \rTo G^*W$ since $LQ \cong (G,m) \times (T,*_T)$. Then, by adjoint transpose under $\Sigma_G \dashv G^*$, these correspond to morphisms $ \Sigma_G ( (G,m) \times (T, *_T)) \rTo W$. But
\begin{eqnarray*}
\Sigma_G((G,m) \times (T, *_T)) & \cong & \Sigma_G (( G,m) \times L\Sigma_G (T, *_T)) \\
                                & \cong & \Sigma_G ( ( G,m) \times (P,*) \times G^* \Sigma_G (T , *_T)) \\
                                & \cong & \Sigma_G (( G,m) \times (P, \pi_2) \times G^* \Sigma_G (T *_T)) \\
                                & \cong & \Sigma_G ( ( G,m) \times G^* ( P \times \Sigma_G(T,*_T))) \\
                                & \cong & \Sigma_G(G,m) \times P \times \Sigma_G(T, *_T) \\
                                & \cong & P \times \Sigma_G(T,*_T)\\
\end{eqnarray*}
and so $Q \cong P \times \Sigma_G ( T , *_T)$ as required.
\end{proof}
We now define principal bundle relative to an arbitrary cartesian category. The definition at this level of generality appears to be originally in \cite{KockGen}.
\begin{definition}
If $G$ is an internal group in a cartesian category $\mathcal{C}$ then a {\em principal $G$-bundle} is a $G$-object $(P,*)$ such that

(i) $!:P \rTo 1$ is an effective descent morphism; and,

(ii) the morphism $(*,\pi_2):G \times P \rTo P \times P$ of $\mathcal{C}$ is an isomorphism.

\end{definition}
The inverse of $(*,\pi_2)$, if it exists, must be a map of the form $(\psi, \pi_2)$ for a morphism $\psi:P \times P \rTo G$. For any `elements' $b$ and $b'$ of $B$, $\psi(b,b')$ is the unique `element' of $G$ such that $\psi(b,b') * b' = b$. $\psi$ has a number of well known properties that will be exploited below; for example, $\psi(g*p,p')=g*\psi(p,p')$ and $\psi(p,g*p')=\psi(p,p')g^{-1}$.

The \emph{category of principal $G$-bundles} is the full subcategory of $[G, \mathcal{C}]$ consisting of objects that are principal $G$-bundles.
\begin{definition}
If $G$ is an internal group in a cartesian category $\mathcal{C}$ and $X$ is an object of $\mathcal{C}$ then a {\em principal $G$-bundle over $X$} is a $G$-object $(P,*)$, together with a morphism $f: P \rTo X$ such that

(i) $f*=f\pi_2$; i.e. $f(g*p)=f(p)$ for any `elements' $g$, $p$ of $G$, $P$ respectively,

(ii) $f:P \rTo X$ is an effective descent morphism; and,

(iii) the morphism $(*,\pi_2):G \times P \rTo P \times_X P$ of $\mathcal{C}/X$ is an isomorphism.

\end{definition}
In our general context of cartesian categories there is no real extra generality when talking about principal bundles over $X$ in comparison to principal bundles:
\begin{lemma}
If $\mathcal{C}$ is a cartesian category, $G$ an internal group and $X$ an object of $\mathcal{C}$, then (i) $[G \times X , \mathcal{C}/X] \cong [G,\mathcal{C}]/(X , \pi_2)$ and (ii) the category of principal $G$-bundles over $X$ is isomorphic to the category of $G \times X$ principal bundles relative to $\mathcal{C}/X$.
\end{lemma}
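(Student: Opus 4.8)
The plan is to treat part (i) as a bookkeeping identification of objects and morphisms on both sides, and then to bootstrap part (ii) off (i) by matching the defining conditions of the two notions of principal bundle one clause at a time. The single non-formal ingredient is the behaviour of effective descent under passage to a slice, which I would isolate as the main step.

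For (i), I would unwind both categories to the same data. An object of $[G,\mathcal{C}]/(X,\pi_2)$ is a $G$-object $(A,*_A)$ together with a $G$-homomorphism $f:(A,*_A)\rTo(X,\pi_2)$; since the action on $X$ is trivial, the homomorphism condition $f*_A=\pi_2(Id_G\times f)$ is exactly $f*_A=f\pi_2$, i.e. $f(g*_Aa)=f(a)$. An object of $[G\times X,\mathcal{C}/X]$ is an object $A_f$ of $\mathcal{C}/X$ equipped with a $(G\times X)$-action in $\mathcal{C}/X$. The crucial observation is the canonical isomorphism $(G\times X)\times_X A_f\cong G\times A$ in $\mathcal{C}$, sending $(g,a)$ to $((g,f(a)),a)$, under which a $(G\times X)$-action over $X$ is precisely a morphism $*_A:G\times A\rTo A$ satisfying $f*_A=f\pi_2$, and the unit/associativity diagrams for $G\times X$ in $\mathcal{C}/X$ reduce verbatim to those for $G$ in $\mathcal{C}$. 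Thus the two classes of objects coincide. On morphisms, a map of $[G\times X,\mathcal{C}/X]$ is a morphism of $\mathcal{C}/X$ (hence commuting with the structure maps to $X$) that commutes with the $(G\times X)$-actions, which is exactly a $G$-homomorphism over $(X,\pi_2)$. This yields the claimed isomorphism of categories.

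For (ii), I would transport the three conditions in the definition of principal $G$-bundle over $X$ through the isomorphism of (i), using the object $P_f$ of $\mathcal{C}/X$ in place of the pair $(P,*),f$. Condition (i) of that definition is already built into the identification of objects in (i). Since products in $\mathcal{C}/X$ are taken over $X$, we have $P_f\times P_f=P\times_X P$ and $(G\times X)\times_X P_f\cong G\times P$; under these identifications the isomorphism requirement $(*,\pi_2):(G\times X)\times_X P_f\rTo P_f\times P_f$ for a $G\times X$-bundle in $\mathcal{C}/X$ becomes verbatim condition (iii), $(*,\pi_2):G\times P\rTo P\times_X P$. So condition (iii) matches the $\mathcal{C}/X$-version of clause (ii) of the principal-bundle definition.

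The one genuinely non-formal step, and the main obstacle, is matching condition (ii), that $f:P\rTo X$ be an effective descent morphism in $\mathcal{C}$, with the requirement that $!:P_f\rTo 1$ be an effective descent morphism in $\mathcal{C}/X$, where $1=X_{Id_X}$ is the terminal object of $\mathcal{C}/X$ and the map $!$ is simply $f$ read inside $\mathcal{C}/X$. Here I would use the standard iterated-slice identifications $(\mathcal{C}/X)/1\cong\mathcal{C}/X$ and $(\mathcal{C}/X)/P_f\cong\mathcal{C}/P$, the latter because an object of $(\mathcal{C}/X)/P_f$ is a morphism $k:Z\rTo P$ whose structure map over $X$ is forced to be $fk$. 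A direct check shows that under these equivalences the pullback functor along $!$ in $\mathcal{C}/X$ is carried to the pullback functor $f^*:\mathcal{C}/X\rTo\mathcal{C}/P$ of $\mathcal{C}$: pulling $W_w\rTo 1$ back along $P_f\rTo 1$ in $\mathcal{C}/X$ gives the projection $W\times_X P\rTo P$, which is exactly $f^*(W_w)$. Since monadicity is invariant under equivalence of categories, $!$ is monadic in $\mathcal{C}/X$ iff $f^*$ is monadic in $\mathcal{C}$, i.e. iff $f$ is an effective descent morphism; so condition (ii) matches the remaining clause of the abstract definition. Combining these three matchings with the isomorphism of (i) gives the asserted isomorphism between the category of principal $G$-bundles over $X$ and the category of $G\times X$-principal bundles relative to $\mathcal{C}/X$.
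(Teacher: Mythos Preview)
Your proposal is correct and follows exactly the route the paper indicates: part (i) is the routine unwinding of the two definitions (which you carry out in detail), and part (ii) is obtained by transporting the three clauses of the definition of principal $G$-bundle over $X$ through the isomorphism of (i), with the iterated-slice identification $(\mathcal{C}/X)/P_f\cong\mathcal{C}/P$ handling the effective-descent clause. The paper's own proof is simply ``(i) can be checked from the definitions and (ii) follows from (i)'', so you have supplied the verification it leaves to the reader.
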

\begin{proof}
(i) can be checked from the definitions and (ii) follows from (i).
\end{proof}
We will use this lemma to ease the proof of our main theorem, which is the purpose of the next Section.

\section{A categorical relationship between principal bundles and Frobenius reciprocity}

We can now state and prove our main result for the case $X=1$; this will be used in the proof for general $X$ to follow.

\begin{proposition}\label{X=1}
$\mathcal{C}$ is a cartesian category and $G$ is an internal group with the property that the functor $G^* : \mathcal{C} \rTo $$  [ G, \mathcal{C}]$ has a left adjoint $\Sigma_G$ such that $\Sigma_G \dashv G^* $ satisfies Frobenius reciprocity. Then there is an equivalence  between the category of  principal $G$-bundles and the category of adjunctions $L \dashv R : \mathcal{C} \pile { \rTo \\ \lTo} [G, \mathcal{C}]$ over $\mathcal{C}$ that satisfy Frobenius reciprocity.

Further any such adjunction is also stably Frobenius.
\end{proposition}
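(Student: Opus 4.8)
The plan is to read everything off from the explicit shape of the left adjoint that the equivalence provides. Writing $(P,*)=L1$, the correspondence established above sends the principal bundle $(P,*)$ to the adjunction whose left adjoint is $(P,*)\times G^*(-)$; equivalently, for the given adjunction there is a natural isomorphism $LY\cong (P,*)\times G^*Y$ (this is exactly the identification of $L$ already used in the proof of Lemma~\ref{descent_lemma}). I would extract two facts from this and then combine them: first, that $L$ preserves pullbacks, and second, that ordinary Frobenius reciprocity already forces $LR$ to split off the fixed object $L1$ as a direct factor. The point of the argument is that, once $LR(-)$ is replaced by $(-)\times L1$, the spare factor $L1$ cancels inside the fibre product that defines Frobenius reciprocity for a sliced adjunction.

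First I would record that $L$ preserves pullbacks. The functor $G^*:\mathcal{C}\rTo [G,\mathcal{C}]$ is a right adjoint and so preserves all limits, while for any fixed object $D$ of a cartesian category the functor $D\times(-)$ preserves pullbacks; since limits in $[G,\mathcal{C}]$ are created in $\mathcal{C}$, this applies with $D=(P,*)$. Hence $L\cong((P,*)\times(-))\circ G^*$ preserves pullbacks. Next I would evaluate the Frobenius isomorphism $L(RA\times W)\cong A\times LW$ at $W=1$, obtaining a natural isomorphism $\theta_A:LRA\cong A\times L1$ in $[G,\mathcal{C}]$, natural in the $G$-object $A$, whose composite with the projection onto $A$ is the counit $\varepsilon_A$. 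Naturality of $\theta$ means that for every morphism $f:B\rTo A$ of $[G,\mathcal{C}]$ the map $LRf$ is carried by $\theta_B$ and $\theta_A$ to $f\times Id_{L1}$.

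Now I would fix a $G$-object $A$ and put $A'=RA$, so the sliced adjunction is $L_{A}\dashv R_{A}$ between $\mathcal{C}/A'$ and $[G,\mathcal{C}]/A$. Given $\alpha=W_g$ in $\mathcal{C}/A'$ and $\beta=B_f$ in $[G,\mathcal{C}]/A$, the underlying object of $R_{A}(\beta)\times_{A'}\alpha$ is the pullback $RB\times_{A'}W$, and applying the two facts in turn gives
\[ L(RB\times_{A'}W)\ \cong\ LRB\times_{LRA}LW\ \cong\ (B\times L1)\times_{A\times L1}LW\ \cong\ B\times_A LW, \]
the first isomorphism because $L$ preserves pullbacks, the second by $\theta_B$ and $\theta_A$, and the last because the common factor $L1$ cancels (the relevant leg $B\times L1\rTo A\times L1$ is $f\times Id_{L1}$, and the residual map $LW\rTo A$ is $\varepsilon_A\circ Lg$, the structure map of $L_A(\alpha)$). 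The target $B\times_A LW$ is the underlying object of $\beta\times_{A}L_{A}(\alpha)$, so this composite is the underlying morphism of the Frobenius comparison map for $L_{A}\dashv R_{A}$. Since isomorphisms and pullbacks in $[G,\mathcal{C}]$ and in its slices are created in $\mathcal{C}$, the sliced adjunction satisfies Frobenius reciprocity; as $A$ was arbitrary, $L\dashv R$ is stably Frobenius.

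The step needing genuine care, and the main obstacle, is the verification that the canonical Frobenius comparison map really is the displayed composite of natural isomorphisms and not merely a parallel arrow. This is a diagram chase tracking the projections $L\pi_1,L\pi_2$, the counit, and the isomorphism $LY\cong (P,*)\times G^*Y$ through the definitions of $L_A$ and $R_A$, together with the observation that the counit of the sliced adjunction is carried by $\theta$ to a projection. Conceptually, however, the entire content is that Frobenius reciprocity trivialises $LR$ up to the fixed factor $L1$, and that this factor is inert inside the fibre product defining the sliced reciprocity map; notably only the \emph{plain} Frobenius property of $\Sigma_G\dashv G^*$ is used, entering solely through the identification of $L$.
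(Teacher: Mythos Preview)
Your proposal treats only the final clause (``any such adjunction is stably Frobenius'') and takes the equivalence between principal $G$-bundles and Frobenius adjunctions as already known. In the paper that equivalence is the bulk of the proof of this proposition: one direction extracts a principal bundle from $L1$ via Lemma~\ref{descent_lemma}, and the other constructs $R=P\otimes_G(-)$ explicitly and verifies the adjunction and Frobenius reciprocity using the map $\psi:P\times P\to G$. So as a proof of the full statement your proposal is incomplete. (Also, the identification $LY\cong (P,*)\times G^*Y$ is not actually ``already used'' in Lemma~\ref{descent_lemma}; it is, however, an immediate consequence of Frobenius together with $Y\cong RG^*Y$, so this is a minor misattribution rather than a gap.)

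For the part you do address, your argument is correct and genuinely different from the paper's. The paper verifies stable Frobenius by a direct element-level computation in the explicit model $L=(P,*)\times G^*(-)$, $R=P\otimes_G(-)$, checking that the subobject $\{(p,x,p'\otimes a)\mid f(x)=p'\otimes n(a)\}$ matches $\{(p,x,a)\mid \psi(p,p^x)*_Bb^x=n(a)\}$ under the already-established Frobenius isomorphism, invoking properties of $\psi$. Your route is more structural: you observe that $L$ preserves pullbacks (being a composite of $G^*$ with a product functor), that ordinary Frobenius at $W=1$ gives a natural $\theta_A:LRA\cong A\times L1$ with $\pi_1\theta_A=\varepsilon_A$, and then that the $L1$ factor cancels in $(B\times L1)\times_{A\times L1}LW$ because the leg $B\times L1\to A\times L1$ is $f\times\mathrm{Id}$. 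The residual check that the composite of these isomorphisms is the canonical comparison map reduces, as you note, to $\pi_B\circ\theta_B=\varepsilon_B$ and $L\pi_2=L\pi_2$. Your argument avoids the explicit tensor description of $R$ and the map $\psi$ altogether; the paper's approach has the advantage of being self-contained once $R$ has been written down, while yours isolates exactly which abstract features (pullback-preservation of $L$, the splitting $LRA\cong A\times L1$) drive the stability.
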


\begin{proof}
Say $L \dashv R : \mathcal{C} \pile { \rTo \\ \lTo} [G, \mathcal{C}]$ satisfies Frobenius reciprocity and has $\Sigma_G L = Id_{\mathcal{C}}$. Let $L1 = (P, *)$. Then $LR(G,m) \cong (P, *) \times (G,m)$, which we have observed already is isomorphic to $G^*P \times (G,m)$. By assumption that $\Sigma_G L = Id_{\mathcal{C}}$ we have that for any object $X$ of $\mathcal{C}$, $X \cong R G^* X$ and so further $LR(G,m) \cong L(1 \times  R G^* R(G,m) \cong (P , * ) \times G^* R(G,m)$. But $\Sigma_G(G,m) \cong 1$ and $\Sigma_G(P,*) = 1$, the latter because $\Sigma_GL1 = 1$. It follows that $R(G, m) \cong P$ because $\Sigma_G$ satisfies Frobenius reciprocity, and this exhibits an isomorphism $G \times P \cong P \times P$. By Lemma \ref{descent_lemma}, $!^P : P \rTo 1 $ is an effective descent morphism; therefore $(P, *)$ is a principal bundle.

In the other direction, say we are given a principal bundle $(P, *)$. We will use $\psi: P \times P \rTo G$ for the map that exists because $G \times P \cong P \times P$. Define $L: \mathcal{C} {\rTo} [ {G} , \mathcal{C}]$ by $LX = (P , *) \times (X , \pi_2)$. Define $R: [ G , \mathcal{C} ]  \rTo \mathcal{C}$ by sending $(A, *_A)$ to the coequalizer of $P \times A$ defined by the arrows
\begin{eqnarray*}
G \times P \times A \pile { \rTo^{ * \times Id_A } \\ \rTo_{(Id_P \times *_A)(Id_P \times i \times Id_A)(\tau \times Id_A)}} P \times A
\end{eqnarray*}
where $\tau: G \times A \rTo A \times G$ is the twist isomorphism and $i:G \rTo G$ is the inverse of $G$. In other words $R(A, *_A)$ is defined to be the tensor $P \otimes_G A$ where $(g * p) \otimes a = p \otimes (g^{-1}*_A a)$ for any `elements' $a$, $p$ and $g$ of $A$, $P$ and $G$ respectively. This coequalizer exists because an easy diagram chase shows that it is isomorphic to $\Sigma_G ( (P,*) \times (A , *_A ))$. There is an `evaluation' map $ev: P \times (P \otimes_G A) \rTo A$ defined by $(b',b \otimes a) \mapsto \psi(b',b)*_A a$. This is well defined because the coequalizer that defines $P \otimes_G A$ is stable under products; this is because $\Sigma_G \dashv G^*$ satisfies Frobenius reciprocity. Using properties of $\psi$ it can be checked that $ev: (P,*) \times (P \otimes_{G} A , \pi_2) \rTo (A, *_A)$; i.e. the evaluation map is a $G$-homomorphism. We now check that $L$ is left adjoint to $R$. Say we are given an object $X$ of $\mathcal{C}$ and an object $(A,*_A)$ of $[G, \mathcal{C}]$, then send any map $f:X \rTo P \otimes_{G} A$ to the morphism
\begin{eqnarray*}
P \times X \rTo^{Id_P \times f} P \times (P \otimes_{G} A) \rTo^{ev} A \text{.}
\end{eqnarray*}
It is easy to check this is a $G$-homomorphism (from  $(P,*) \times (X , \pi_2)$ to $(A,*_A)$) because $ev(g*p,p^x \otimes a^x)=\psi(g*p,p^x)*_A a^x = (g\psi(p,p^x)) *_A a^x = g *_A (\psi(p,p^x) *_A a^x ) = g *_A ev (p,p^x \otimes a^x)$ where $f(x) = p^x \otimes a^x$. On the other hand given any $G$-homomorphism $g:(P,*) \times (X, \pi_2) \rTo (A,*_A)$ notice that the map
\begin{eqnarray*}
P \times X \rTo^{(\pi_1,g)} P \times A \rTo^{\otimes} P \otimes_{G} A
\end{eqnarray*}
composes equally with $* \times Id_X :G \times P \times X \rTo P \times X$ and  $\pi_2 \times Id_X :G \times P \times X \rTo P \times X$ and so factors through $\pi_2: P \times X \rTo X$ (because $\Sigma_{G}((P,*) \times (X, \pi_2)) \cong \Sigma_{G}(P,*) \times X \cong  1 \times X $). This defines a map $X \rTo  P \otimes_{G} A$. To check that this establishes a natural bijection between $\mathcal{C}(X,P \otimes_{G}A)$ and $[G,\mathcal{C}]((P,*) \times (X, \pi_2),(A,*_A))$ is a routine application of the properties of $\psi: P \times P \rTo G$. Therefore $L \dashv R$. Observe that the conunit of the adjunction is given by the evaluation map $ev: (P,*) \times (P \otimes_{G} A , \pi_2) \rTo (A, *_A)$.

We must show that $L \dashv R$ satisfies Frobenius reciprocity; i.e., that the map $(P,*) \times (X \times P \otimes_{G} A, \pi_2) \rTo  (P,*) \times (X , \pi_2) \times (A, *_A)$ given by $(p,x,p'\otimes a) \mapsto (p,x,\psi(p,p')*_A a)$ has an inverse. It is easy to check using the properties of $\psi$ that the assignment $(p,x,a) \mapsto (p,x,p \otimes a)$ defines a $G$-homomorphism and is the required inverse.

Also observe that $\Sigma_G (P,*) =1$ because $!:P \rTo 1$ is a regular epimorphism. Therefore $\Sigma_G LX = \Sigma_G ((P,*) \times (X, \pi_2)) \cong X$ and so $L \dashv R$ is over $\mathcal{C}$ as required.

It is clear that we have now established a categorical equivalence between principal $G$-bundles and adjunctions. This is because any $L \dashv R$ that satisfies Frobenius reciprocity is uniquely determined by $L1$ and, in the other direction, the principal bundle associated with the adjunction $(P,*) \times (\_,\pi_2) \dashv P \otimes_{G} (\_)$ is $(P,*)$.

Finally we prove that, in fact, the adjunction $L \dashv R$ is stably Frobenius. Let $(B, *_B)$ be an object of $[ G , \mathcal{C} ]$. We must check, for any $G$-homomorphism $n: (A, *_A) \rTo (B, *_B)$ and any $f: X \rTo P \otimes_{G} B$ that the canonical map $(P,*) \times (X \times_{P \otimes_{G} B} P \otimes_{G} A, \pi_2) \rTo ((P,*) \times (X,\pi_2)) \times_{(B, *_B)} (A , *_A)$ is an isomorphism. Given that we have already established an isomorphism $(P,*) \times (X \times P \otimes_{G} A, \pi_2) \cong  (P,*) \times (X , \pi_2) \times (A, *_A)$ this is just a question of verifying that the subobject of $(P,*) \times (X \times P \otimes_{G} A, \pi_2)$ determined by $\{ (p,x,p' \otimes a) | p^x \otimes b^x = p' \otimes n(a) \}$ corresponds under this isomorphism to the subobject $\{ (p,x,a) | \psi(p,p^x)*_B b^x = n(a) \}$ of $(P,*) \times (X , \pi_2) \times (A, *_A)$ (where we are using $p^x\otimes b^x$ for $f(x)$). It must also be verified that the isomorphism is over $(B,*_B)$. Both easily follow again from the properties of $\psi$.

\end{proof}

In the proof above we did not use the fact that $!: P \rTo 1$ is an effective descent morphism in the construction of a Frobenius adjunction from the principal bundle $(P,*)$; we only exploited the fact that it is a regular epimorphism. It follows that as a side result we immediately have the following lemma:
\begin{lemma}
$G$ is an internal group in a cartesian category $\mathcal{C}$, $(P,*)$ is a $G$-object such that the morphism $(*,\pi_2):G \times P \rTo P \times P$ of $\mathcal{C}$ is an isomorphism and $!:P \rTo 1$ is a regular epimorphism. Then, $!:P \rTo 1 $ is an effective descent morphism and $(P,*)$ is a principal $G$-bundle (provided $G$ is such that $G^*$ has a left adjoint and the resulting adjunction satisfies Frobenius reciprocity).
\end{lemma}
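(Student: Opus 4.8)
The plan is to recycle the reverse half of the proof of Proposition~\ref{X=1}: out of the $G$-object $(P,*)$ I would build an adjunction $L\dashv R:\mathcal{C}\pile{\rTo \\ \lTo}[G,\mathcal{C}]$ over $\mathcal{C}$ satisfying Frobenius reciprocity, and then invoke Lemma~\ref{descent_lemma} to conclude that $!:P\rTo 1$ is an effective descent morphism. As the remark preceding the statement records, that construction never used effective descent of $!:P\rTo 1$; it used only regular epimorphicity, which is exactly the hypothesis available here.

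The single place where regular epimorphicity enters is through the identity $\Sigma_G(P,*)\cong 1$, so I would establish this first. The object $\Sigma_G(P,*)$ is the coequalizer of the pair $*,\pi_2:G\times P\pile{\rTo \\ \rTo}P$; precomposing with the isomorphism $(*,\pi_2):G\times P\rTo P\times P$ identifies this pair with the two projections $\pi_1,\pi_2:P\times P\pile{\rTo \\ \rTo}P$, which form the kernel pair of $!:P\rTo 1$. Since a regular epimorphism is the coequalizer of its own kernel pair, $!:P\rTo 1$ is that coequalizer and hence $\Sigma_G(P,*)\cong 1$.

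With this in hand I would set $LX=(P,*)\times(X,\pi_2)$ and $R(A,*_A)=P\otimes_G A\cong\Sigma_G((P,*)\times(A,*_A))$ exactly as in Proposition~\ref{X=1}. Each verification there — existence of the tensor, stability of its defining coequalizer under products (from Frobenius reciprocity of $\Sigma_G\dashv G^*$), the adjunction bijection via the map $\psi:P\times P\rTo G$ supplied by the assumed isomorphism, the fact that $L\dashv R$ is over $\mathcal{C}$, and its Frobenius reciprocity — depends on $P$ only through $\Sigma_G(P,*)\cong 1$ and the existence of $\psi$, neither of which needs effective descent, so it transcribes unchanged. To complete the hypotheses of Lemma~\ref{descent_lemma} I would check $P\cong R(G,m)$ directly: $R(G,m)\cong\Sigma_G((P,*)\times(G,m))\cong\Sigma_G(G^*P\times(G,m))\cong P\times\Sigma_G(G,m)\cong P$, using the special isomorphism $(P,*)\times(G,m)\cong G^*P\times(G,m)$, Frobenius reciprocity of $\Sigma_G$, and $\Sigma_G(G,m)\cong 1$.

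Lemma~\ref{descent_lemma} then yields that $!:P\rTo 1$ is an effective descent morphism; together with the assumed isomorphism $(*,\pi_2):G\times P\rTo P\times P$, which is precisely clause~(ii) of the definition of principal $G$-bundle, this makes $(P,*)$ a principal $G$-bundle. I expect the substantive step to be the first one — recognising the coequalizer defining $\Sigma_G(P,*)$ as the kernel-pair coequalizer of $!:P\rTo 1$ so that regular epimorphicity alone forces $\Sigma_G(P,*)\cong 1$ — since everything after that is a transcription of arguments already in hand, and the real care is simply in confirming that the constructions of $L$ and $R$ genuinely touch $P$ only through $\Sigma_G(P,*)\cong 1$ and $\psi$.
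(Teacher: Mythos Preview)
Your proposal is correct and is exactly the argument the paper has in mind: the lemma is stated as an immediate byproduct of the observation that the reverse construction in Proposition~\ref{X=1} uses only regular epimorphicity of $!:P\rTo 1$ (via $\Sigma_G(P,*)\cong 1$) together with the existence of $\psi$, so that feeding the resulting Frobenius adjunction into the forward direction (i.e.\ Lemma~\ref{descent_lemma}) upgrades regular epimorphicity to effective descent. Your explicit identification of the coequalizer defining $\Sigma_G(P,*)$ with the kernel-pair coequalizer of $!:P\rTo 1$, and your direct verification of $R(G,m)\cong P$, simply spell out what the paper leaves as a one-line remark.
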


Our main result is now an easy application of the case $X = 1$:

\begin{theorem}
$\mathcal{C}$ is a cartesian category and $G$ is an internal group with the property that the functor $G^* : \mathcal{C} \rightarrow [ G, \mathcal{C}]$ has a left adjoint $\Sigma_G$ such that $G^* \dashv \Sigma_G$ is stably Frobenius. $X$ is an object of $\mathcal{C}$. Then there is an equivalence between the category of principal $G$-bundles over $X$ and the category of adjunctions $L \dashv R : \mathcal{C}/X \pile { \rTo \\ \lTo} [G, \mathcal{C}]$ that are stably Frobenius and are over $\mathcal{C}$ (i.e. $\Sigma_G L = \Sigma_X$).
\end{theorem}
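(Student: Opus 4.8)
The plan is to deduce the statement from Proposition \ref{X=1} by passing to the slice category $\mathcal{C}/X$, exactly as signalled in the introduction. First I would invoke part (ii) of the Lemma relating slices: the category of principal $G$-bundles over $X$ is isomorphic to the category of principal $(G\times X)$-bundles computed relative to the cartesian category $\mathcal{C}/X$. This reduces the bundle side of the claimed equivalence to the bundle side of Proposition \ref{X=1}, now applied with $\mathcal{C}$ replaced by $\mathcal{C}/X$ and $G$ replaced by the internal group $G\times X$.

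Before applying Proposition \ref{X=1} in $\mathcal{C}/X$ I must check its hypothesis there, namely that $(G\times X)^*\colon \mathcal{C}/X \to [G\times X,\mathcal{C}/X]$ has a left adjoint $\Sigma_{G\times X}$ whose adjunction satisfies Frobenius reciprocity. This is precisely where the \emph{stably} Frobenius assumption on $\Sigma_G\dashv G^*$ is used: slicing that adjunction at the object $X$ produces $(\Sigma_G)_X\dashv (G^*)_X\colon [G,\mathcal{C}]/(X,\pi_2) \to \mathcal{C}/X$, and under the isomorphism $[G\times X,\mathcal{C}/X]\cong [G,\mathcal{C}]/(X,\pi_2)$ of part (i) of the slicing Lemma (together with $G^*X=(X,\pi_2)$) this sliced adjunction is exactly $\Sigma_{G\times X}\dashv (G\times X)^*$. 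By definition of stably Frobenius it satisfies Frobenius reciprocity, so Proposition \ref{X=1} applies and yields an equivalence between principal $(G\times X)$-bundles and adjunctions $\mathcal{C}/X \to [G\times X,\mathcal{C}/X]$ over $\mathcal{C}/X$ that satisfy Frobenius reciprocity (and are automatically stably Frobenius, by the final clause of that Proposition).

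It then remains to translate this adjunction side back into the statement's terms, i.e. to produce an equivalence between the category of Frobenius adjunctions $\mathcal{C}/X \to [G\times X,\mathcal{C}/X]$ over $\mathcal{C}/X$ and the category of stably Frobenius adjunctions $\mathcal{C}/X \to [G,\mathcal{C}]$ over $\mathcal{C}$. Using $[G\times X,\mathcal{C}/X]\cong [G,\mathcal{C}]/(X,\pi_2)$ once more, the comparison is effected by composing with the adjunction $\Sigma_{(X,\pi_2)}\dashv (X,\pi_2)^*\colon [G,\mathcal{C}]/(X,\pi_2)\to [G,\mathcal{C}]$ associated to the unique map $(X,\pi_2)\to 1$ in $[G,\mathcal{C}]$, which is stably Frobenius as a pullback adjunction: given $\tilde L\dashv \tilde R$ over $\mathcal{C}/X$ I set $L=\Sigma_{(X,\pi_2)}\tilde L$ and $R=\tilde R\,(X,\pi_2)^*$. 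The \emph{over $\mathcal{C}$} condition $\Sigma_G L=\Sigma_X$ then follows from the coherence $\Sigma_G\Sigma_{(X,\pi_2)}=\Sigma_X\Sigma_{G\times X}$ applied to $\Sigma_{G\times X}\tilde L=\mathrm{Id}$, while \emph{stably Frobenius} of $L\dashv R$ follows from closure of the stably Frobenius property under composition, using that both $\Sigma_{(X,\pi_2)}\dashv(X,\pi_2)^*$ and $\tilde L\dashv\tilde R$ are stably Frobenius.

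The step I expect to be the genuine obstacle is showing this last comparison is an \emph{equivalence} of adjunction categories, not merely a functor in one direction. The inverse should send a stably Frobenius adjunction $L\dashv R$ over $\mathcal{C}$ to its canonical lift $\tilde L$ through the slice, whose structure map $LW_g \to (X,\pi_2)$ is built from the unit of $\Sigma_G\dashv G^*$ followed by $G^*g$; the condition $\Sigma_G L=\Sigma_X$ is what forces this lift to land in $[G,\mathcal{C}]/(X,\pi_2)$ and to be functorial. I would verify that the two constructions are mutually pseudo-inverse and match the Frobenius conditions, the delicate point being that \emph{stably} Frobenius for $L\dashv R$ into $[G,\mathcal{C}]$ corresponds under slicing to plain Frobenius for $\tilde L\dashv\tilde R$ into $[G,\mathcal{C}]/(X,\pi_2)$; this reduces to comparing the sliced counits and the products (pullbacks over $(X,\pi_2)$) in the two settings, which is routine but is where care is needed. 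Alternatively, since both adjunction categories have already been shown equivalent to the same category of bundles, one can finish by checking that the comparison functor is compatible with those two bundle equivalences, so that it is forced to be an equivalence.
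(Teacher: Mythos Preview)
Your proposal is correct and follows essentially the same route as the paper: reduce to Proposition~\ref{X=1} in $\mathcal{C}/X$ via the slicing lemma $[G\times X,\mathcal{C}/X]\cong[G,\mathcal{C}]/(X,\pi_2)$, check the Frobenius hypothesis there using the stably Frobenius assumption on $\Sigma_G\dashv G^*$, and then compare the two categories of adjunctions by composing with $\Sigma_{(X,\pi_2)}\dashv(X,\pi_2)^*$. The only cosmetic difference is in the inverse direction: where you describe the lift $\tilde L$ via the unit of $\Sigma_G\dashv G^*$ followed by $G^*g$, the paper packages the same map as the explicit factorisation $L_{(X,\pi_2)}\circ\Sigma_{\Delta_X}$ (noting $R(X,\pi_2)\cong X^*X$ so that $(\mathcal{C}/X)/R(X,\pi_2)\simeq\mathcal{C}/(X\times X)$), which makes the Frobenius check for $\tilde L\dashv\tilde R$ immediate from closure under composition.
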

\begin{proof}
By the proposition all that is required is a proof that the category of adjunctions $L' \dashv R' : \mathcal{C}/X \pile{\rTo \\ \lTo} [G \times X , \mathcal{C}/X]$ over $\mathcal{C}/X$ that satisfy Frobenius reciprocity  is equivalent to the category of adjunctions $L \dashv R : \mathcal{C}/X \pile { \rTo \\ \lTo} [G, \mathcal{C}]$ over $\mathcal{C}$ that are stably Frobenius. To see that this is sufficient to complete the proof recall from above that $[G, \mathcal{C}]/(X, \pi_2) \cong [ G \times X,  \mathcal{C}/X ]$ and so the assumption that $G^* \dashv \Sigma_G$ is stably Frobenius implies that $(G \times X)^*:\mathcal{C}/X \rightarrow [G \times X,\mathcal{C}/X]$ has a left adjoint and the resulting adjunction satisfies Frobenius reciprocity, allowing the proposition to be applied.
Now any adjunction $L \dashv R : \mathcal{C} /X \pile { \rTo \\ \lTo} [G, \mathcal{C}]$ over $\mathcal{C}$ factors as
\begin{eqnarray*}
\mathcal{C} /X \pile { \rTo^{\Sigma_{\Delta_X}} \\ \lTo_{\Delta^*_X} } \mathcal{C} /X \times X \pile { \rTo^{L_{(X,\pi_2)}} \\ \lTo_{R_{(X,\pi_2)}}} [G, \mathcal{C}]/(X, \pi_2)  \pile { \rTo^{\Sigma_{(X, \pi_2)}} \\ \lTo_{(X, \pi_2)^*}}[G, \mathcal{C}]
\end{eqnarray*}
and so gives rise to an adjunction $L_{(X,\pi_2)} \Sigma_{\Delta_X}  \dashv \Delta^*_X R_{(X,\pi_2)}$ which can be seen to be over $\mathcal{C}/X$; this adjunction satisfies Frobenius reciprocity because $L \dashv R$ is stably Frobenius (and the property of satisfying Frobenius reciprocity is preserved by composition of adjunctions). In the other direction say we are given $L' \dashv R' : \mathcal{C}/X \pile{\rTo \\ \lTo} [G \times X , \mathcal{C}/X]$ over $\mathcal{C}/X$ that satisfies Frobenius reciprocity. Then by the proposition $L' \dashv R'$ is stably Frobenius and so the composite adjunciton
\begin{eqnarray*}
\mathcal{C} /X \pile { \rTo^{L'} \\ \lTo_{R'} }  [G, \mathcal{C}]/(X, \pi_2)  \pile { \rTo^{\Sigma_{(X, \pi_2)}} \\ \lTo_{(X, \pi_2)^*}}[G, \mathcal{C}]
\end{eqnarray*}
is stably Frobenius. It can be readily checked that this composite adjunction is over $\mathcal{C}$ and that the two constructions establish an  equivalences between two categories of adjunctions.
\end{proof}

\begin{corollary}\label{indexed}
For an adjunction $L \dashv R : \mathcal{C}/X \pile { \rTo \\ \lTo} [G, \mathcal{C}]$ over $\mathcal{C}$ the following are equivalent:

(1) $L \dashv R$ is stably Frobenius,

(2) $L_{G^*X} \dashv R_{G^*X}$ satisfies Frobenius reciprocity; and,

(3) $L_{G^*Z} \dashv R_{G^*Z}$ satisfies Frobenius reciprocity for every object $Z$ of $\mathcal{C}$.
\end{corollary}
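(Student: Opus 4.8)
The plan is to prove the cycle $(1)\Rightarrow(3)\Rightarrow(2)\Rightarrow(1)$, with the first two implications essentially formal and all the real content in $(2)\Rightarrow(1)$. Indeed, $(1)$ says by definition that $L_Y\dashv R_Y$ satisfies Frobenius reciprocity for \emph{every} object $Y$ of $[G,\mathcal{C}]$, so in particular for each $Y=G^*Z$, giving $(3)$; and $(3)$ specialised to $Z=X$ gives $(2)$. The striking part is the converse, that Frobenius reciprocity at the single object $G^*X$ forces it at all of $[G,\mathcal{C}]$.

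For $(2)\Rightarrow(1)$ I would first identify $L_{G^*X}\dashv R_{G^*X}$ with the middle factor of the factorisation of $L\dashv R$ used in the proof of the Theorem. Since $L\dashv R$ is over $\mathcal{C}$ we have $RG^*\cong X^*$, so $R(G^*X)\cong(X\times X)_{\pi_1}$ and hence $(\mathcal{C}/X)/R(G^*X)\cong\mathcal{C}/(X\times X)$; together with $G^*X=(X,\pi_2)$ this exhibits $L_{G^*X}\dashv R_{G^*X}$ as exactly the adjunction $L_{(X,\pi_2)}\dashv R_{(X,\pi_2)}:\mathcal{C}/(X\times X)\pile{\rTo \\ \lTo}[G,\mathcal{C}]/(X,\pi_2)$. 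Assumption $(2)$ thus says this middle factor satisfies Frobenius reciprocity.

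Next I would pass to the associated adjunction $L'\dashv R'$, which is over $\mathcal{C}/X$ by the construction in the Theorem, with left adjoint $L'=L_{(X,\pi_2)}\Sigma_{\Delta_X}$ landing in $[G,\mathcal{C}]/(X,\pi_2)\cong[G\times X,\mathcal{C}/X]$. Because $\Sigma_{\Delta_X}\dashv\Delta_X^*$ is a pullback adjunction it is (stably) Frobenius, and Frobenius reciprocity is stable under composition, so $L'\dashv R'$ satisfies Frobenius reciprocity. The standing hypothesis that $\Sigma_G\dashv G^*$ is stably Frobenius ensures, as noted in the Theorem, that $(G\times X)^*:\mathcal{C}/X\rTo[G\times X,\mathcal{C}/X]$ has a left adjoint with the resulting adjunction satisfying Frobenius reciprocity. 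Hence the Proposition applies in the cartesian category $\mathcal{C}/X$ with internal group $G\times X$, and its final clause upgrades $L'\dashv R'$ from merely Frobenius to \emph{stably} Frobenius.

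Finally I would recompose. The displayed factorisation of the Theorem reads, on left adjoints, $L=\Sigma_{(X,\pi_2)}L_{(X,\pi_2)}\Sigma_{\Delta_X}=\Sigma_{(X,\pi_2)}L'$, where $\Sigma_{(X,\pi_2)}\dashv(X,\pi_2)^*$ is again a pullback adjunction and so stably Frobenius. As stably Frobenius adjunctions are closed under composition, $L\dashv R$ is stably Frobenius, which is $(1)$. The main obstacle I anticipate is not conceptual but bookkeeping: pinning down cleanly that $L_{G^*X}\dashv R_{G^*X}$ is the middle factor $L_{(X,\pi_2)}\dashv R_{(X,\pi_2)}$ under the slice identifications, and that the two flanking pullback adjunctions reassemble precisely to $L\dashv R$; once these identifications are secured the argument is just two applications of stability under composition together with one invocation of the Proposition.
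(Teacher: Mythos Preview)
Your proposal is correct and follows essentially the same approach as the paper: the paper also dismisses $(1)\Rightarrow(3)\Rightarrow(2)$ as trivial weakening and, for $(2)\Rightarrow(1)$, composes $L_{G^*X}\dashv R_{G^*X}$ with $\Sigma_{\Delta_X}\dashv\Delta_X^*$ to obtain a Frobenius adjunction over $\mathcal{C}/X$, then invokes the ``Finally'' clause of the Proposition to upgrade it to stably Frobenius. You are more explicit than the paper in two places---the identification of $L_{G^*X}\dashv R_{G^*X}$ with the middle factor $L_{(X,\pi_2)}\dashv R_{(X,\pi_2)}$, and the final recomposition with $\Sigma_{(X,\pi_2)}\dashv(X,\pi_2)^*$ to recover $L\dashv R$---but these are exactly the bookkeeping steps the paper leaves implicit.
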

We do not use these characterizations below; they are include here because they can be applied to show that geometric morphisms between bounded toposes over a base topos $\mathbf{Set}$ can be characterised as $\mathbf{Loc}$-indexed adjunctions (in the sense of indexed category theory, e.g. B1 of \cite{Elephant}). It is hoped to make this the subject of a separate paper.
\begin{proof}
Clearly (1) implies (3) implies (2) because (3) and (2) are weaker conditions than (1). (2) implies (1) because if $L_{G^*X} \dashv R_{G^*X}$ satisfies Frobenius reciprocity then so does the adjunction $\mathcal{C}/X \pile{\rTo \\ \lTo} \mathcal{C}/X \times X  \pile{\rTo \\ \lTo} [G, \mathcal{C}]/G^*X$. This latter adjunction, as we have remarked in the proof of the theorem, is over $\mathcal{C}/X$ and so we may apply the `Finally' part of the Proposition \ref{X=1} to conclude that it is stably Frobenius.
\end{proof}

\section{Extending to Groupoids}
The above definitions and results can easily be generalised from groups to groupoids. If $\mathbb{G}=(G_1 \times_{G_0} G_1 \rTo^m G_1 \pile { \rTo^{d_0} \\ \rTo_{d_1}} G_0 )$ is an internal groupoid in a cartesian category $\mathcal{C}$ then $((G_1)_{d_0} , m)$ is itself a `special' object of $[ \mathbb{G} , \mathcal{C} ]$ in the sense that   $((G_1)_{d_0} , m) \times (A_g, *_A) \cong ((G_1)_{d_0} , m) \times \mathbb{G}^*A$ where $\mathbb{G}^*: \mathcal{C} \rightarrow [ \mathbb{G} , \mathcal{C} ]$ is the functor that send an object $X$ of $\mathcal{C}$ to the $\mathbb{G}$-object $ (\pi_1:G_0 \times X \rTo G_0, d_1 \times Id_X)$. The data for a principal $\mathbb{G}$-bundle additionally includes a map $g: P \rTo G_0$ that is invariant under the action. The proofs above go through essentially unchanged, so we content ourselves with stating the following theorem:
\begin{theorem}\label{groupoid}
$\mathcal{C}$ is a cartesian category and $\mathbb{G}$ is an internal groupoid with the property that the functor $\mathbb{G}^* : \mathcal{C} \rightarrow [ G, \mathcal{C}]$ has a left adjoint $\Sigma_{\mathbb{G}}$ such that ${\mathbb{G}}^* \dashv \Sigma_{\mathbb{G}}$ is stably Frobenius. $X$ is an object of $\mathcal{C}$. Then there is an equivalence between the category of principal $\mathbb{G}$-bundles over $X$ and the category of adjunctions $L \dashv R : \mathcal{C}/X \pile { \rTo \\ \lTo} [\mathbb{G}, \mathcal{C}]$ that are stably Frobenius and are over $\mathcal{C}$.
\end{theorem}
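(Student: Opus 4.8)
The plan is to reduce Theorem~\ref{groupoid} to the group case (Theorem, the preceding result) by exploiting the same slicing trick that was used to pass from Proposition~\ref{X=1} to the general-$X$ theorem, but now applied to the groupoid analogue. First I would set up the groupoid analogues of the two structural facts already invoked in the group case: (a) the ``specialness'' identity $((G_1)_{d_0},m)\times(A_g,*_A)\cong((G_1)_{d_0},m)\times\mathbb{G}^*A$ stated just above, which plays exactly the role that $(A,*_A)\times(G,m)\cong(A,\pi_2)\times(G,m)$ played for groups; and (b) the effective descent fact for the canonical atlas $(G_1)_{d_0}\rTo G_0$, i.e.\ the groupoid replacement for $!:(G,m)\rTo 1$ being effective descent, coming from the equivalence $[\mathbb{G},\mathcal{C}]/((G_1)_{d_0},m)\simeq\mathcal{C}/G_0$. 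With these in hand, the groupoid versions of Lemma~\ref{descent_lemma} and Proposition~\ref{X=1} go through verbatim, with the terminal object $1$ systematically replaced by $G_0$ and with $\Sigma_{\mathbb{G}}\dashv\mathbb{G}^*$ assumed Frobenius.

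The core of the argument, then, is to establish the groupoid form of Proposition~\ref{X=1}: an equivalence between principal $\mathbb{G}$-bundles and Frobenius adjunctions $L\dashv R:\mathcal{C}\pile{\rTo\\\lTo}[\mathbb{G},\mathcal{C}]$ over $\mathcal{C}$, together with automatic stability. In the forward direction I would take $L\dashv R$ Frobenius over $\mathcal{C}$, set $(P,*)=L(G_0)$ (the natural groupoid analogue of $L1$), and run the same computation that shows $R$ of the special object is isomorphic to $P$; this supplies the isomorphism $G_1\times_{G_0}P\cong P\times_{G_0}P$ witnessing condition (iii) of the $\mathbb{G}$-bundle definition, while the descent lemma supplies condition (ii). Conversely, given a principal $\mathbb{G}$-bundle $(P,*)$ with anchor $g:P\rTo G_0$, I would define $LX=(P,*)\times(X,\pi_2)$ and $R(A,*_A)=P\otimes_{\mathbb{G}}A$ as the appropriate coequalizer, check it equals $\Sigma_{\mathbb{G}}((P,*)\times(A,*_A))$, build the evaluation counit from the division map $\psi$ (now valued in $G_1$, defined only on the fibre product $P\times_{G_0}P$), and verify adjointness, Frobenius, and the over-$\mathcal{C}$ condition exactly as before. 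The ``Finally'' stability clause is then obtained by the same subobject-correspondence argument using $\psi$.

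Once the groupoid Proposition is available, the reduction to general $X$ is the one already written in the proof of the Theorem: I would verify the groupoid analogue of $[\mathbb{G},\mathcal{C}]/\mathbb{G}^*X\cong[\mathbb{G}\times X,\mathcal{C}/X]$ (more precisely the slicing of $[\mathbb{G},\mathcal{C}]$ over $\mathbb{G}^*X=(G_0\times X,\dots)$ is the category of modules for the groupoid $\mathbb{G}\times X$ internal to $\mathcal{C}/X$), so that the stable-Frobenius hypothesis on $\mathbb{G}^*\dashv\Sigma_{\mathbb{G}}$ transfers the Proposition's hypotheses down to $\mathcal{C}/X$. Then the factorisation of an over-$\mathcal{C}$ adjunction $L\dashv R:\mathcal{C}/X\pile{\rTo\\\lTo}[\mathbb{G},\mathcal{C}]$ through $\Sigma_{\Delta_X}\dashv\Delta_X^*$ and the slice adjunction, together with ``stably Frobenius is preserved under composition and detected on slices'' (the groupoid analogue of Corollary~\ref{indexed}), gives the two mutually inverse constructions and hence the equivalence.

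The main obstacle I anticipate is bookkeeping around the anchor map and the fibre products. Every product over $1$ that appeared in the group case becomes a product over $G_0$ in the groupoid case, the division map $\psi$ is only partially defined (on $P\times_{G_0}P$) and lands in $G_1$ rather than $G$, and the ``specialness'' isomorphism must be tracked carefully so that the key calculation $\Sigma_{\mathbb{G}}((G_1)_{d_0}\times(T,*_T))\cong P\times_{G_0}\Sigma_{\mathbb{G}}(T,*_T)$ closes up correctly. The essential mathematical content is identical to the group case; the real work is confirming that each use of the group unit $e:1\rTo G$ and the terminal map $!:G\rTo 1$ has a faithful groupoid translation via $d_0,d_1:G_1\rTo G_0$ and the identities-inclusion, and that all the Frobenius-reciprocity coequalizers remain stable under the relevant base changes. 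Since the authors assert the proofs ``go through essentially unchanged,'' I would present the groupoid Proposition and reduction in outline, flagging only these fibre-product and anchor-map adjustments as the points requiring verification.
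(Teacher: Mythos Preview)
Your proposal is correct and is precisely the approach the paper intends: the paper gives no proof at all for Theorem~\ref{groupoid}, merely asserting that ``the proofs above go through essentially unchanged,'' and your outline is exactly the expected unpacking of that assertion---rerun Lemma~\ref{descent_lemma} and Proposition~\ref{X=1} with $G_0$-fibred products in place of ordinary products and the special object $((G_1)_{d_0},m)$ in place of $(G,m)$, then repeat the slicing reduction of the main Theorem verbatim. One small slip: the principal bundle should be $(P,*)=L1$, not $L(G_0)$; the domain of $L$ is still $\mathcal{C}$, and it is the \emph{codomain} side where $1$ is replaced by $G_0$ (the anchor $g:P\rTo G_0$ then appears as part of the $\mathbb{G}$-object structure on $L1$).
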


\section{Application to Geometric Morphisms}
We now apply our results to the case $\mathcal{C}=\mathbf{Loc}$, the category of locales and so $\mathbb{G}$ is a groupoid  internal to $\mathbf{Loc}$; i.e. a localic groupoid. See, for example, Part $C$ of \cite{Elephant} for relevant background material. Our aim is to explain how to apply the results above to show that geometric morphisms $f: Sh(X) \rTo B\mathbb{G}$ are the same thing as principal $\hat{\mathbb{G}}$-bundles over $X$, where $Sh(X)$ is the topos of sheaves for a locale $X$ and $B \mathbb{G}$ is the topos of $\mathbb{G}$-equivariant sheaves; that is, the full subcategory of $[ \mathbb{G}, \mathbf{Loc} ]$ consisting of $\mathbb{G}$-objects, $(A_g,*_A)$ such that $g:A \rTo G_0$ is a local homeomorphism. $\hat{\mathbb{G}}$ is the \'{e}tale completion of $\mathbb{G}$; see, e.g. C5.3.16 of \cite{Elephant} for a description of \'{e}tale completion. We will show that we cannot hope to apply the result for arbitrary localic groupoids $\mathbb{G}=(G_1 \pile { \rTo^{d_0} \\ \rTo_{d_1}} G_0 )$, but we can for the 2 important special cases of (i) an open and (ii) a proper localic groupoid; that is, $d_0$ (equivalently $d_1$) is (i) open and (ii) proper. To apply Theorem \ref{groupoid} we need to make two connections. Firstly we need to recall that geometric morphisms $f : \mathcal{F} \rTo \mathcal{E}$ between any two elementary toposes $\mathcal{F}$ and $\mathcal{E}$ can be represented as stably Frobenius adjunctions $\Sigma_f \dashv f^*$ between the corresponding categories of locales (that is, between $\mathbf{Loc}_{\mathcal{F}}$ and $\mathbf{Loc}_{\mathcal{E}}$). Secondly we need to recall what conditions are required to ensure that the equivalence $\mathbf{Loc}_{B\mathbb{G}}\simeq [ \hat{\mathbb{G}}, \mathbf{Loc}]$ holds (it is well known that $\mathbf{Loc}_{Sh(X)}\simeq \mathbf{Loc}/X$; e.g. Theorem C1.6.3 of \cite{Elephant}). The following two propositions address how to make these two connections in turn.

\begin{proposition}
For any two elementary toposes $\mathcal{F}$ and $\mathcal{E}$ there is a categorical equivalence between the category of geometric morphisms from $\mathcal{F}$ to $\mathcal{E}$ and the category of adjunctions $L \dashv R : \mathbf{Loc}_{\mathcal{F}} \pile{ \rTo \\ \lTo } \mathbf{Loc}_{\mathcal{E}}$ that are stably Frobenius and have $R$ preserving the Sierpi\'{n}ski locale.
\end{proposition}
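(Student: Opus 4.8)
The plan is to construct functors in both directions between the two categories and show they are mutually inverse up to natural isomorphism, treating the passage from geometric morphisms to adjunctions as essentially recorded in \cite{towgeom} and concentrating the work on the role of the Sierpi\'{n}ski locale. In the forward direction I would start from a geometric morphism $f:\mathcal{F}\rTo\mathcal{E}$ and note that its inverse image $f^{*}:\mathcal{E}\rTo\mathcal{F}$, preserving finite limits and all colimits, carries internal frames of $\mathcal{E}$ to internal frames of $\mathcal{F}$ and so defines a functor $R:=f^{*}:\mathbf{Loc}_{\mathcal{E}}\rTo\mathbf{Loc}_{\mathcal{F}}$, with left adjoint $L:=\Sigma_{f}$; that this adjunction exists and is stably Frobenius is exactly what \cite{towgeom} supplies. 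It then remains to check $R$ preserves the Sierpi\'{n}ski locale $\mathbb{S}$, whose frame $\mathcal{O}\mathbb{S}$ is the free frame on one generator: a geometric inverse image preserves this free (colimit-type) construction, so $f^{*}\mathbb{S}_{\mathcal{E}}\cong\mathbb{S}_{\mathcal{F}}$. On morphisms, a geometric transformation is a natural transformation of inverse images, which transports to one of the induced right adjoints and hence, up to the standard variance convention, of the left adjoints, giving the functor on hom-categories.

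The substance is the converse: from a stably Frobenius adjunction $L\dashv R:\mathbf{Loc}_{\mathcal{F}}\pile{\rTo \\ \lTo}\mathbf{Loc}_{\mathcal{E}}$ with $R$ preserving $\mathbb{S}$, I must manufacture a geometric morphism. The guiding idea is that preserving $\mathbb{S}$ forces $R$ to respect the open-classifier $\mathcal{O}(Y)\cong\mathbf{Loc}_{\mathcal{E}}(Y,\mathbb{S})$, so that $R$ is compatible with the frame structure; using the stable Frobenius hypothesis to control the exponentials $\mathbb{S}^{(-)}$ that compute these frames, one recovers that $R$ is induced, under the equivalence $\mathbf{Loc}_{\mathcal{E}}^{\mathrm{op}}\simeq\mathbf{Frm}_{\mathcal{E}}$, by a finite-limit- and colimit-preserving functor between the toposes. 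Restricting $R$ to the discrete locales --- which I would characterise intrinsically by overtness together with openness of the diagonal, and which recover $\mathcal{E}$ and $\mathcal{F}$ as $\mathbf{Loc}_{\mathcal{E}}^{\mathrm{disc}}$ and $\mathbf{Loc}_{\mathcal{F}}^{\mathrm{disc}}$ --- yields the candidate inverse image $f^{*}:\mathcal{E}\rTo\mathcal{F}$; I then check it is left exact and admits a right adjoint $f_{*}$, and that the two constructions are mutually inverse, matching 2-cells as in the forward direction.

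The hard part will be exactly this reconstruction: showing that ``$R$ preserves $\mathbb{S}$'' together with stable Frobenius captures precisely geometric-morphism-ness and excludes pathological adjunctions. Concretely, the obstacle is to descend from $R$ on all of $\mathbf{Loc}_{\mathcal{E}}$ to a genuine topos-level inverse image --- to prove $R$ sends discrete locales to discrete locales and that the resulting $f^{*}$ is left exact with a right adjoint. I expect the cleanest route is to lean on the characterisation already established in \cite{towgeom}, verifying that the condition used there is equivalent to preservation of $\mathbb{S}$, so that the proposition reduces to that result with the Sierpi\'{n}ski condition made explicit.
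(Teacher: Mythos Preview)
Your plan is essentially the paper's: cite \cite{towgeom} for the equivalence and check the Sierpi\'{n}ski condition. Two small divergences are worth noting. First, you assert that \cite{towgeom} already supplies \emph{stable} Frobenius for $\Sigma_f\dashv f^*$; the paper does not take this for granted but gives the (short) argument: for each locale $X$ in $\mathcal{E}$ one forms the pulled-back geometric morphism $f_X:Sh_{\mathcal{F}}(f^*X)\to Sh_{\mathcal{E}}(X)$, and Lemma~3.2 of \cite{towgeom} identifies $\Sigma_{f_X}\dashv f_X^*$ with the sliced adjunction $(\Sigma_f)_X\dashv (f^*)_X$ under $\mathbf{Loc}_{Sh(X)}\simeq\mathbf{Loc}/X$, so Frobenius at each slice follows from Frobenius for $f_X$. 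Second, your sketched independent converse via restriction to discrete locales is more than the paper attempts; the paper simply invokes \cite{towgeom} for the reconstruction of a geometric morphism from a Frobenius adjunction whose right adjoint preserves the Sierpi\'{n}ski locale (and its internal distributive lattice structure), which is exactly the route you flag at the end as ``cleanest''. Your instinct there is correct; the discrete-locale reconstruction is not needed here.
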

\begin{proof}
This is essentially the main result of \cite{towgeom}. If $f: \mathcal{F} \rTo \mathcal{E}$ is a geometric morphism between elementary toposes then there is a `pullback' adjunction $\Sigma _{f}\dashv f^{\ast }$ between the category of locales in $\mathcal{F}$ and the category of locales in $\mathcal{E}$, with the right adjoint being given by pullback in the category of elementary toposes. \cite{towgeom} shows how C2.4.11 of \cite{Elephant} can be used to easily show that the adjunction $\Sigma _{f}\dashv f^{\ast }$ satisfies Frobenius reciprocity for any geometric morphism $f$ and, moreover, shows that any such adjunction, $L \dashv R$, arises in this way from a uniquely determined geometric morphism, provided $R$ preserves the Sierpi\'{n}ski locale and its internal distributive lattice structure. But for any locale $X$ over $\mathcal{E}$ there is a geometric morphism $f_X : Sh_{\mathcal{F}}(f^*X)  \rTo Sh_{\mathcal{E}}(X)$ obtained by pulling back along the localic geometric morphism $Sh(X) \rTo \mathcal{E}$. Lemma 3.2 of \cite{towgeom} confirms the easily observed fact that the pullback adjunction $\Sigma _{f_X}\dashv (f_X)^{\ast }$ is $(\Sigma _{f})_X\dashv (f^*)_X$ (under $\mathbf{Loc}_{Sh(X)} \simeq \mathbf{Loc}/X)$ and so $\Sigma _{f}\dashv f^{\ast }$ is stably Frobenius since $(\Sigma _f)_X \dashv (f^{\ast })_X$ satisfies Frobenius reciprocity for each $X$.
\end{proof}
For all localic groupoids $\mathbb{G}$, the functor $\mathbb{G}^*: \mathbf{Loc} \rightarrow [ \mathbb{G} , \mathbf{Loc} ]$ has a left adjoint since $\mathbf{Loc}$ has coequalizers. But the resulting adjunction does not necessarily satisfy Frobenius reciprocity. To see this, consider a regular epimorphism $f: X \rTo Y$ in the category of locales that is not stable under products (so, there exists a locale $Q$ such that $X \times Q  \rTo^{Id_Q \times f} Y \times Q $ is not a regular epimorphism - see p39, preamble to Lemma 4.4, of \cite{Plewe_Descent}, for a specific example of such $f$ and $Q$). Let $\mathbb{G}$ be the groupoid determined by the kernel pair of $f$. Then $\Sigma_{\mathbb{G}}(1)=Y$ and $\mathbb{G}^*Q$ is $(X \times Q, (X \times_Y X) \times Q \rTo^{\pi_2 \times Id_Q} X \times Q)$, and so $\Sigma_{\mathbb{G}}\mathbb{G}^*X$ is the coequalizer of the product of the kernel pair of $f$ and $Q$. By assumption this coequalizer is not $Y \times Q$ and so we cannot have $\Sigma_{\mathbb{G}} (1 \times \mathbb{G}^*(Q)) \cong \Sigma_{\mathbb{G}} (1) \times Q$ and $\Sigma_{\mathbb{G}} \dashv \mathbb{G}^*$ does not satisfy Frobenius reciprocity. So, ensuring that $\Sigma_{\mathbb{G}} \dashv \mathbb{G}^*$ is stably Frobenius must require some further assumptions of $\mathbb{G}$. The following proposition describes two cases of such further assumptions:
\begin{proposition}
If $\mathbb{G}$ is an open or proper localic groupoid then

(i) $\mathbf{Loc}_{B\mathbb{G}}\simeq [\hat{\mathbb{G}} , \mathbf{Loc}]$ over $\mathbf{Loc}$; and,

(ii) the adjunction $\Sigma_{\hat{\mathbb{G}}} \dashv {\hat{\mathbb{G}}}^*: [\hat{\mathbb{G}} , \mathbf{Loc}] \pile { \rTo \\ \lTo} \mathbf{Loc} $ is stably Frobenius.
\end{proposition}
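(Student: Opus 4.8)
The plan is to establish (i) by a descent argument and then to obtain (ii) as a short consequence of (i) together with the preceding Proposition. For (i), I would analyse the canonical localic surjection $q : Sh(G_0) \rTo B\mathbb{G}$ exhibiting $B\mathbb{G}$ as a quotient of $Sh(G_0)$. Under the equivalence $\mathbf{Loc}_{Sh(G_0)} \simeq \mathbf{Loc}/G_0$ (Theorem C1.6.3 of \cite{Elephant}), pulling back internal locales along $q$ yields a functor $q^* : \mathbf{Loc}_{B\mathbb{G}} \rTo \mathbf{Loc}/G_0$, and the whole of (i) comes down to showing that this functor is of effective descent, with the descent datum carried by a locale $A \rTo G_0$ being exactly an action making it an object of $[\hat{\mathbb{G}}, \mathbf{Loc}]$. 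The point at which the \'{e}tale completion enters is the identification of the descent groupoid: the kernel $Sh(G_0) \times_{B\mathbb{G}} Sh(G_0) \rightrightarrows Sh(G_0)$ is, by the very construction of $\hat{\mathbb{G}}$ (C5.3.16 of \cite{Elephant}), the localic groupoid $\hat{\mathbb{G}}$, so that descent data along $q$ are precisely $\hat{\mathbb{G}}$-objects.

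The effective-descent step is where the open or proper hypothesis is indispensable and is the main obstacle. For an open (respectively proper) localic groupoid the structure map $d_0$ is open (respectively proper), which makes $q$ an open (respectively proper) surjection of toposes; open surjections are of effective descent by the Joyal--Tierney theorem and proper surjections by its Moerdijk--Vermeulen analogue (see Part C of \cite{Elephant}), and in each case the descent extends from the ambient sheaf-theoretic data to internal locales. The counterexample preceding the proposition shows that some such hypothesis cannot be dropped: there $q$ fails to be of effective descent and Frobenius reciprocity for $\Sigma_{\mathbb{G}} \dashv \mathbb{G}^*$ breaks. Finally I would check that the equivalence so obtained commutes with the canonical functors into $\mathbf{Loc}$, i.e. is an equivalence ``over $\mathbf{Loc}$''; this is a naturality check, and it is precisely this compatibility that feeds the proof of (ii).

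For (ii), I would transport the adjunction $\Sigma_{\hat{\mathbb{G}}} \dashv \hat{\mathbb{G}}^*$ across the equivalence of (i). The functor $\hat{\mathbb{G}}^*$ sends a locale to its trivial $\hat{\mathbb{G}}$-object, which under (i) corresponds to pulling a locale of the base back along the unique (terminal) geometric morphism $f : B\mathbb{G} \rTo \mathbf{Set}$; hence $\hat{\mathbb{G}}^*$ is identified with $f^* : \mathbf{Loc} \rTo \mathbf{Loc}_{B\mathbb{G}}$. As both are right adjoints between the same pair of categories they agree up to isomorphism, and therefore so do their left adjoints $\Sigma_{\hat{\mathbb{G}}}$ and $\Sigma_f$. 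The preceding Proposition asserts that the pullback adjunction $\Sigma_f \dashv f^*$ of any geometric morphism is stably Frobenius; applying it to $f$ gives that $\Sigma_{\hat{\mathbb{G}}} \dashv \hat{\mathbb{G}}^*$ is stably Frobenius, as required.
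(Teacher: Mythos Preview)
Your proposal is correct and follows essentially the same route as the paper: the paper likewise analyses the surjection $d:Sh(G_0)\rTo B\mathbb{G}$, identifies its kernel pair with $\hat{\mathbb{G}}$ via C5.3.16 of \cite{Elephant}, and invokes descent of locales along open/proper surjections (Theorem C5.1.5 of \cite{Elephant}, i.e.\ Joyal--Tierney and its proper analogue) together with Lemma C5.3.6 to ensure $d$ is open/proper; for (ii) it, like you, transports $\gamma_{\mathbb{G}}^*$ across the equivalence and appeals to the preceding Proposition. The only addition in the paper is a brief alternative direct argument for (ii), observing that the coequalizer defining $\Sigma_{\hat{\mathbb{G}}}$ is open (resp.\ proper) and hence pullback stable.
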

\begin{proof}
(i) Theorem C5.1.5 of \cite{Elephant} shows that locales descend along geometric morphisms $f:\mathcal{F} \rTo \mathcal{E} $, whenever $f$ is an open surjection or a  proper surjection. For any localic groupoid $\mathbb{G}$ there is a surjective geometric morphism $d: Sh(G_0) \rTo B\mathbb{G}$ (whose inverse image is the forgetful functor), and it is easy to see that the definition of `locales descend along $d$' (see the preamble to Lemma 5.1.2 of \cite{Elephant}) is equivalent to the assertion that $\mathbf{Loc}_{B\mathbb{G}}\simeq [\hat{\mathbb{G}} , \mathbf{Loc}]$ because $\hat{\mathbb{G}}$ is by definition the localic groupoid determined by pulling back $d$ against itself (C5.3.16 of \cite{Elephant}).

Lemma C5.3.6 of \cite{Elephant} shows that for an open (or proper) localic groupoid $\mathbb{G}$ the geometric morphism $d$ is an open (or proper) surjection and so $\mathbf{Loc}_{B\mathbb{G}}\simeq [\hat{\mathbb{G}} , \mathbf{Loc}]$ as required.

The forgetful functor $[\hat{\mathbb{G}} , \mathbf{Loc}] \rTo   \mathbf{Loc}/G_0$ corresponds to $d^*:\mathbf{Loc}_{B\mathbb{G}} \rTo \mathbf{Loc}/G_0$ under this equivalence and since the forgetful functor is monadic, it reflects isomorphisms. Using $\gamma_{\mathbb{G}}$ for the geometric morphism $B\mathbb{G} \rTo \mathbf{Set}$, observe that $d^*\gamma^*_{\mathbb{G}} \cong G_0^*$ and so the equivalence $\mathbf{Loc}_{B\mathbb{G}}\simeq [\hat{\mathbb{G}} , \mathbf{Loc}]$ can be seen to be over $\mathbf{Loc}$ since $G_0$ is the locale of objects of $\hat{\mathbb{G}}$.

(ii) is clear from (i) because $\gamma_{\mathbb{G}}$ induces a stably Frobenius adjunction $\Sigma_{\gamma_{\mathbb{G}}} \dashv \gamma_{\mathbb{G}}^*: \mathbf{Loc}_{B\mathbb{G}} \pile { \rTo \\ \lTo} \mathbf{Loc} $ by the last Proposition and we have observed that $\gamma_{\mathbb{G}}^*$ maps to $\hat{\mathbb{G}}^*$ under $\mathbf{Loc}_{B\mathbb{G}}\simeq [\hat{\mathbb{G}} , \mathbf{Loc}]$

Alternatively, (ii) can be proved directly. If $\mathbb{G}$ is open (or proper) then so is its \'{e}tale completion (C5.3.16 of \cite{Elephant}). But asserting that the adjunction $\Sigma_{\mathbb{G}} \dashv \mathbb{G}^*$ is stably Frobenius can be seen to be equivalent to asserting that the coequalizer determined by $\Sigma_{\mathbb{G}}(A_{g},*_A)$ is pullback stable. This is well known to be the case if the groupoid is open or proper because the coequalizer determined by $\Sigma_{\mathbb{G}}(A_{g},*_A)$ must be open (e.g. Proposition C5.1.4 of \cite{Elephant}) and open (and proper) coequalizers are pullback stable. 

\end{proof}
\begin{remark}
It is worth noting that the direct proof of (ii) can be done axiomatically (using an axiomatic system similar to \cite{tow_weakt}). This shows that statements and results about open maps are formally dual to statements and results about proper maps. It also follows that we could apply our main result to $[\mathbb{G}, \mathbf{Loc}]$, without going to the \'{e}tale completion; but the cost is that $[\mathbb{G}, \mathbf{Loc}]$ will not necessarily be a category of locales for some topos. As future work it is may be worth examining the question of whether an axiomatic approach to locale theory is stable under the formation of the category of $\mathbb{G}$-objects, where $\mathbb{G}$ is not necessarily \'{e}tale complete. This could provide a category of `spaces' more granular than the category of bounded toposes and still capable of classifying principal bundles.
\end{remark}
We now state and prove our main application.
\begin{theorem}
Let $\mathbb{G}$ be a localic groupoid and $X$ a locale.

(i) If $\mathbb{G}$ is open, there is an equivalence between the category of geometric morphisms $Sh(X) \rTo B\mathbb{G}$ and the category of principal $\hat{\mathbb{G}}$-bundles over $X$. The principal bundle maps $f:P \rTo X$ that arise in this way are always open surjections.

(ii) If $\mathbb{G}$ is proper, there is an equivalence between the category of geometric morphisms $Sh(X) \rTo B\mathbb{G}$ and the category of principal $\hat{\mathbb{G}}$-bundles over $X$. The principal bundle maps $f:P \rTo X$ that arise in this way are always proper surjections.
\end{theorem}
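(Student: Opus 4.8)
The plan is to build the required equivalence by composing three equivalences already established. First I would invoke the first of the two propositions above to identify the category of geometric morphisms $Sh(X) \rTo B\mathbb{G}$ with the category of stably Frobenius adjunctions $L \dashv R : \mathbf{Loc}_{Sh(X)} \rTo \mathbf{Loc}_{B\mathbb{G}}$ whose right adjoint preserves the Sierpi\'{n}ski locale. Next, using $\mathbf{Loc}_{Sh(X)} \simeq \mathbf{Loc}/X$ together with $\mathbf{Loc}_{B\mathbb{G}} \simeq [\hat{\mathbb{G}}, \mathbf{Loc}]$ (the latter supplied by part (i) of the second proposition, which is where openness or properness of $\mathbb{G}$ is used), I would transport these to adjunctions $L \dashv R : \mathbf{Loc}/X \rTo [\hat{\mathbb{G}}, \mathbf{Loc}]$. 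Finally I would apply Theorem \ref{groupoid} to $\mathcal{C} = \mathbf{Loc}$ and the groupoid $\hat{\mathbb{G}}$ --- its hypothesis being exactly part (ii) of the second proposition --- to identify the stably Frobenius adjunctions that are over $\mathbf{Loc}$ with principal $\hat{\mathbb{G}}$-bundles over $X$. Composing the three yields the desired equivalence.

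The point needing care is that the first proposition restricts $R$ by asking it to preserve the Sierpi\'{n}ski locale, whereas Theorem \ref{groupoid} restricts the adjunction by asking it to be over $\mathbf{Loc}$ (i.e. $\Sigma_{\hat{\mathbb{G}}} L \cong \Sigma_X$); so I must check these two conditions single out the same adjunctions. Since the equivalence $\mathbf{Loc}_{B\mathbb{G}} \simeq [\hat{\mathbb{G}}, \mathbf{Loc}]$ is over $\mathbf{Loc}$ it carries $\gamma^*_{\mathbb{G}}$ to $\hat{\mathbb{G}}^*$ and $\gamma^*_{Sh(X)}$ to $X^* : \mathbf{Loc} \rTo \mathbf{Loc}/X$, and the relevant Sierpi\'{n}ski locales are the images of the Sierpi\'{n}ski locale of $\mathbf{Loc}$ under these functors. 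Hence being over $\mathbf{Loc}$, i.e. $R\hat{\mathbb{G}}^* \cong X^*$, at once forces $R$ to preserve the Sierpi\'{n}ski locale. For the converse I would use that any geometric morphism between toposes bounded over $\mathbf{Set}$ is automatically over $\mathbf{Set}$, the morphism to $\mathbf{Set}$ being unique up to canonical isomorphism; thus each adjunction $\Sigma_f \dashv f^*$ produced by the first proposition satisfies $\Sigma_{\gamma_{\mathbb{G}}} \Sigma_f \cong \Sigma_{\gamma_{Sh(X)}}$, which transports to $\Sigma_{\hat{\mathbb{G}}} L \cong \Sigma_X$. The two classes therefore coincide and the three equivalences compose.

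There remains the identification of the bundle map $f : P \rTo X$ and the verification that it is an open (resp. proper) surjection. Tracing the construction of the bundle from its adjunction (as in Proposition \ref{X=1}), the total space $P$ is $R = f^*$ applied to the distinguished object $((G_1)_{d_0}, m)$ of $[\hat{\mathbb{G}}, \mathbf{Loc}]$, and this object corresponds under $\mathbf{Loc}_{B\mathbb{G}} \simeq [\hat{\mathbb{G}}, \mathbf{Loc}]$ to $\Sigma_d(1)$ for the canonical surjection $d : Sh(G_0) \rTo B\mathbb{G}$ (its restriction to $G_0$ being $G_1 \rTo^{d_0} G_0$); thus $P \cong f^* \Sigma_d(1)$. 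Forming the pullback $\mathcal{P}$ of $d$ along $f$, with projections $\tilde{f} : \mathcal{P} \rTo Sh(G_0)$ and $\tilde{d} : \mathcal{P} \rTo Sh(X)$, and using that $d$ is of effective descent for locales in the open and proper cases, Beck--Chevalley gives $f^* \Sigma_d \cong \Sigma_{\tilde{d}} \tilde{f}^*$, so $P \cong \Sigma_{\tilde{d}}(1)$. As $d$ is localic so is its pullback $\tilde{d}$, whence $\mathcal{P} \cong Sh(P)$ and $f : P \rTo X$ is the localic reflection of $\tilde{d}$. By Lemma C5.3.6 of \cite{Elephant} the surjection $d$ is open (resp. proper) when $\mathbb{G}$ is open (resp. proper), and open (resp. proper) surjections are stable under pullback, so $\tilde{d}$, and hence $f : P \rTo X$, is an open (resp. proper) surjection.

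The hard part will be this last paragraph: pinning down the total space $P$ and its structure map as the pulled-back surjection $\tilde{d}$. This rests on the Beck--Chevalley identity $f^* \Sigma_d \cong \Sigma_{\tilde{d}} \tilde{f}^*$, which is legitimate only because we have restricted to open or proper $\mathbb{G}$ (where locales descend along $d$), together with the localic-ness of $d$; it is here that the genuinely geometric content resides. By contrast the matching of the Sierpi\'{n}ski and over-$\mathbf{Loc}$ conditions and the formal chaining of equivalences are comparatively routine.
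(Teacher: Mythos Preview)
Your approach is essentially the paper's: chain the two propositions with Theorem~\ref{groupoid} and then invoke pullback stability of open/proper surjections. Your treatment of the condition-matching is actually more complete than the paper's, which only argues that ``over $\mathbf{Loc}$'' implies preservation of the Sierpi\'nski locale and leaves the converse implicit; your observation that bounded toposes over $\mathbf{Set}$ admit an essentially unique morphism to $\mathbf{Set}$ is the right way to close that gap.

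Two small points. First, since you are working in $[\hat{\mathbb{G}},\mathbf{Loc}]$, the distinguished object is $((\hat{G}_1)_{d_0},m)$ rather than $((G_1)_{d_0},m)$; its underlying locale over $G_0$ is $\hat{G}_1$, the pullback of $d$ along itself, not the original $G_1$. Second, your final paragraph works harder than necessary. The paper simply observes that the bundle map $f:P\to X$ is obtained by pulling back the localic surjection $d:Sh(G_0)\to B\mathbb{G}$ along the given geometric morphism $Sh(X)\to B\mathbb{G}$, and then appeals to pullback stability of open (resp.\ proper) surjections. You recover the same fact via $P \cong f^*\Sigma_d(1)$ and a Beck--Chevalley identity, but effective descent is not really what licenses that identity; the identification $f^*\Sigma_d(1)\cong \Sigma_{\tilde d}(1)$ is immediate from the definition of $f^*$ on $\mathbf{Loc}_{B\mathbb{G}}$ as topos-theoretic pullback, together with localicness of $d$. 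Either way the conclusion is the same.
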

Any Grothendieck topos is equivalent to $B\mathbb{G}$ for some open localic groupoid (C5.2.11 of \cite{Elephant}), so (i) provides a principal bundle description of the points (with localic domains at least) of arbitrary Grothendieck toposes. In fact one can always choose an \'{e}tale complete open localic groupoid to represent a Grothendieck topos (C5.3.16 \cite{Elephant}), and so for any Grothendieck topos $\mathcal{E}$ there is a localic groupoid $\mathbb{G}$ such that geometric morphisms $Sh(X) \rTo \mathcal{E}$ (over $\mathbf{Set}$) are the same things as principal $\mathbb{G}$-bundles over $X$.
(i) is known for \'{e}tale groupoids; that is, groupoids such that $d_0$ (equivalently $d_1$) is a local homeomorphism; \cite{IM_classtop}, \cite{IM_class_fourier}.
\begin{proof}
(i) and (ii) together: The proof is essentially a question of applying our main theorem (Theorem \ref{groupoid}), given the last two propositions. Notice for any adjunction $\mathbf{Loc}/X \pile{ \rTo \\ \lTo } \mathbf{Loc}_{B\mathbb{G}}$ that is over $\mathbf{Loc}$, the right adjoint must preserve the Sierpi\'{n}ski locale because both $\gamma_{\mathbb{G}}^*: \mathbf{Loc} \rTo \mathbf{Loc}_{B\mathbb{G}}$ and $X^*: \mathbf{Loc} \rTo \mathbf{Loc}/X$ preserve the Sierpi\'{n}ski locale.

For any principal bundle $(f: P \rTo X,*:G_1 \times_{G_0} P \rTo P)$ determined by either the equivalence of (i) or (ii), it should be clear that the morphism $f$ is an open (or proper) surjection. This is because it is determined by pullback of the open (proper) surjection $d: Sh(G_0) \rTo B \mathbb{G}$ and open (proper) surjections are pullback stable.
\end{proof}

\section{Further work}
There are two areas where more detailed further work should easily yield specific results:

1. Results of Moerdijk (\cite{IM_class_gp_II}) show how geometric morphisms can be described as certain locales with actions, and so are similar to our results. In that paper the actions are of a localic category, rather than a localic groupoid and so it is not immediately clear how to relate Moerdijk's results back to ours. However the key construction of \cite{IM_class_gp_II} also uses a tensor, similarly to our results, so there appears to be a close relationship.

2. In this paper we have only looked at geometric morphisms $Sh(X) \rTo B\mathbb{G}$ over $\bf{Set}$, rather than general geometric morphisms $\mathcal{F} \rTo B \mathbb{G}$. For $\mathcal{F}$ bounded over $\bf{Set}$ we can always find an open groupoid $\mathbb{H}$ so that such general geometric morphisms can be represented as stably Frobenius adjunctions between $[ \mathbb{H}, \mathbf{Loc} ]$ and $[ \mathbb{G}, \mathbf{Loc} ]$. It is expected that in a category whose objects are stably Frobenius adjunctions over some base cartesian category $\mathcal{C}$ (and whose morphisms are stably Frobenius adjunctions over $\mathcal{C}$), any object of the form $[ \mathbb{H}, \mathcal{C}]$ is a suitable coequalizer (perhaps of the simplicial diagram determined by $\mathbb{H}$). In this way it should be straightforward to extend the results from $Sh(X)$ to an arbitrary bounded topos $\mathcal{F}$, so providing a description of general geometric morphisms as a locale over 2 bases ($H_0$ and $G_0$) with two (interacting) groupoid actions such that one of the actions is principal.

The notion of a parallel theory of `proper' principal bundles, hinted at in the introduction, is more speculative, but has obvious appeal.


\begin{thebibliography}{test}

\bibitem[J02]{Elephant}  Johnstone, P.T. \emph{Sketches of an elephant: A topos theory compendium}. Vols 1, 2, Oxford Logic Guides \textbf{43},
\textbf{44}, Oxford Science Publications, 2002.

\bibitem[JT84]{JoyT}  Joyal, A. and Tierney, M. \emph{An Extension of the Galois Theory of Grothendieck}, Memoirs of the American Mathematical Society
\textbf{309}, 1984.

\bibitem[K89]{KockGen}  Kock, Anders. \emph{Fibre bundles in general categories}, Journal of Pure and Applied Algebra \textbf{56.3} (1989): 233-245.

\bibitem[I90]{IM_class_gp_II} Moerdijk, I. \emph{The classifying topos of a continuous groupoid. II.} Cahiers de topologie et geometrie differentielle categoriques, tome \textbf{31}, No. 2 (1990), 137-168

\bibitem[I91]{IM_class_fourier} Moerdijk, I. \emph{Classifying toposes and foliations. Annales de l'institut Fourier}, \textbf{41} no. 1 (1991), p. 189-209.

\bibitem[I96]{IM_classtop} Moerdijk, I. \emph{Classifying spaces and classifying topoi.} No. \textbf{1616}. Springer Verlag, (1995).

\bibitem[P97]{Plewe_Descent} Plewe, T. \emph{Localic triquotient maps are effective descent maps.} Mathematical Proceedings of the Cambridge Philosophical Society. Vol. \textbf{122}. No. 01. Cambridge University Press, (1997) 17-43.

\bibitem[P00]{Plewe_Quotient} Plewe, T. \emph{Quotient maps of locales.} Applied Categorical Structures \textbf{8}.1-2 (2000) 17-44.

\bibitem[T06]{towpara} Townsend, C.F. \emph{On the Parallel between the Suplattice and Preframe approaches to Locale Theory} Annals of Pure and Applied Logic, Volume \textbf{137}, Numbers 1-3 (2006) 391-412

\bibitem[T10]{tow_weakt} Townsend, C. F. \emph{An axiomatic account of weak triquotient assignments in locale theory.} Journal of Pure and Applied Algebra 214.\textbf{6} (2010): 729-739.

\bibitem[T10]{towgeom} Townsend, C.F. \emph{A representation theorem for geometric morphisms.} Applied Categorical Structures. \textbf{18} (2010) 573-583


\end{thebibliography}
\end{document}